\documentclass[10pt]{article} % For LaTeX2e
\usepackage[mathscr]{eucal}
\usepackage[cmex10]{amsmath}
\usepackage{epsfig,epsf,psfrag}
\usepackage{amssymb,amsmath,amsthm,amsfonts,latexsym,bm}
\usepackage{amsmath,graphicx,bm,xcolor,url}
\usepackage{array}%array and tabular environments
\usepackage{verbatim}
\usepackage{bm}
\usepackage{cite}
\usepackage{algpseudocode}
\usepackage{algorithm}
\usepackage{verbatim}
\usepackage{textcomp}
\usepackage{mathrsfs}
\usepackage{multirow}
\usepackage{epstopdf}
\catcode`~=11 \def\UrlSpecials{\do\~{\kern -.15em\lower .7ex\hbox{~}\kern .04em}} \catcode`~=13 

\allowdisplaybreaks[1]

\newcommand{\ba}{\mathbf{a}}
\newcommand{\bA}{\mathbf{A}}
\newcommand{\bb}{\mathbf{b}}
\newcommand{\bB}{\mathbf{B}}
\newcommand{\bc}{\mathbf{c}}
\newcommand{\bC}{\mathbf{C}}

\newcommand{\be}{\mathbf{e}}
\newcommand{\bE}{\mathbf{E}}

\newcommand{\bF}{\mathbf{F}}

\newcommand{\bI}{\mathbf{I}}

\newcommand{\bu}{\mathbf{u}}
\newcommand{\bU}{\mathbf{U}}
\newcommand{\bv}{\mathbf{v}}
\newcommand{\bV}{\mathbf{V}}

\newcommand{\bx}{\mathbf{x}}

\newcommand{\by}{\mathbf{y}}

\newcommand{\bbR}{\mathbb{R}}

\DeclareMathAlphabet{\mathbsf}{OT1}{cmss}{bx}{n}
\DeclareMathAlphabet{\mathssf}{OT1}{cmss}{m}{sl}% slanted sans serif

\DeclareSymbolFont{bsfletters}{OT1}{cmss}{bx}{n}  
\DeclareSymbolFont{ssfletters}{OT1}{cmss}{m}{n}
\DeclareMathSymbol{\bsfGamma}{0}{bsfletters}{'000}
\DeclareMathSymbol{\ssfGamma}{0}{ssfletters}{'000}
\DeclareMathSymbol{\bsfDelta}{0}{bsfletters}{'001}
\DeclareMathSymbol{\ssfDelta}{0}{ssfletters}{'001}
\DeclareMathSymbol{\bsfTheta}{0}{bsfletters}{'002}
\DeclareMathSymbol{\ssfTheta}{0}{ssfletters}{'002}
\DeclareMathSymbol{\bsfLambda}{0}{bsfletters}{'003}
\DeclareMathSymbol{\ssfLambda}{0}{ssfletters}{'003}
\DeclareMathSymbol{\bsfXi}{0}{bsfletters}{'004}
\DeclareMathSymbol{\ssfXi}{0}{ssfletters}{'004}
\DeclareMathSymbol{\bsfPi}{0}{bsfletters}{'005}
\DeclareMathSymbol{\ssfPi}{0}{ssfletters}{'005}
\DeclareMathSymbol{\bsfSigma}{0}{bsfletters}{'006}
\DeclareMathSymbol{\ssfSigma}{0}{ssfletters}{'006}
\DeclareMathSymbol{\bsfUpsilon}{0}{bsfletters}{'007}
\DeclareMathSymbol{\ssfUpsilon}{0}{ssfletters}{'007}
\DeclareMathSymbol{\bsfPhi}{0}{bsfletters}{'010}
\DeclareMathSymbol{\ssfPhi}{0}{ssfletters}{'010}
\DeclareMathSymbol{\bsfPsi}{0}{bsfletters}{'011}
\DeclareMathSymbol{\ssfPsi}{0}{ssfletters}{'011}
\DeclareMathSymbol{\bsfOmega}{0}{bsfletters}{'012}
\DeclareMathSymbol{\ssfOmega}{0}{ssfletters}{'012}

\newcommand{\tila}{\tilde{a}}

\newcommand{\bLambda}{\bm{\Lambda}}
\newcommand{\bSigma	}{\bm{\Sigma}}

\DeclareMathOperator{\sgn}{sgn}

\newtheorem{theorem}{Theorem} 
\newtheorem{lemma}[theorem]{Lemma}

\newtheorem{remark}[theorem]{Remark}

\newcommand{\qednew}{\nobreak \ifvmode \relax \else
      \ifdim\lastskip<1.5em \hskip-\lastskip
      \hskip1.5em plus0em minus0.5em \fi \nobreak
      \vrule height0.75em width0.5em depth0.25em\fi}

\usepackage{natbib}
\usepackage{amsmath}
\usepackage{caption}
\usepackage{subcaption}
\usepackage[preprint]{tmlr}
\usepackage{hyperref}
\usepackage{url}

\title{On the Induced Norms of Matrices and Grothendieck problems}

\author{\name Lan V. Truong \email lantv@hcmut.edu.vn\\
      \addr Faculty of Computer Science and Engineering,\\
      Ho Chi Minh City University of Technology (HCMUT),\\
     Vietnam National University Ho Chi Minh City (VNU-HCM)
\AND 
\name M. H. Duong \email h.duong@bham.ac.uk \\
      \addr School of Mathematics,\\
      University of Birmingham,\\
      Birmingham B15 2TT, UK.}
\def\month{MM}  % Insert correct month for camera-ready version
\def\year{YYYY} % Insert correct year for camera-ready version
\def\openreview{\url{https://openreview.net/forum?id=XXXX}} % Insert correct link to OpenReview for camera-ready version

\begin{document}
\maketitle

\begin{abstract}
We study the induced matrix norm $\|\bA\|_{q \to r}$, whose exact value has been known only in a few classical cases. Determining this norm has long been regarded as difficult due to the highly non-convex nature of its variational definition. Existing works offer numerical estimates or analytic bounds but no exact formula. In this paper we present a purely analytic framework that determines $\|\bA\|_{q \to r}$ exactly for all $q, r \ge 1$ for several classes of important matrices. For these matrices, using a direct connection between the induced norms and Grothendieck problems, our results also simultaneously provide exact values for the later. 
%The proof is surprisingly short and conceptually simple, providing an unexpected resolution of a long-standing open problem.
\end{abstract}
\tableofcontents
\section{Introduction} \label{introduction}
Let $\bA \in \mathbb{R}^{m \times n}$ be a $m\times n$ real matrix, and let $q, r \ge 1$ be real numbers.
The induced (or operator) $q \to r$ norm of $\bA$ is defined by
\begin{equation}\label{eq:def}
    \|\bA\|_{q \to r}
    := \sup_{x \in \bbR^n: \|\bx\|_q \leq 1} \|\bA\bx\|_r,
\end{equation}
where for a given vector $\by\in\bbR^n$ and $s\geq 1$, $\|\by\|_s=(\sum_{i=1}^n|y_i|^s)^{1/s}$. 

The induced norm is a central object that bridges linear algebra, functional analysis, optimization, and theoretical computer science. It provides a unified way to quantify how linear transformations distort vectors across different geometries.
%This norm measures the maximal expansion of vectors under the linear map $\bA$ when the domain is 
%equipped with the $L^{q}$-norm and the codomain with the $L^{r}$-norm.
%are central objects in analysis and operator
%theory. 
%For a matrix $A \in \bbR^{m\times n}$, the induced
%$(q\to r)$ norm $\|\bA\|_{q\to r}=
%\max_{x\in \bbR^n: \|\bx\|_q \leq 1} \|\bA \bx\|_r$ captures the exact
%magnitude by which a linear transformation may enlarge a vector. 
%Induced matrix norms appear in many branches of Mathematics: in Banach space theory,
%interpolation of operators, geometric functional analysis, and stability estimates for linear maps.
The study of induced matrix norms has a long tradition in matrix analysis and functional
analysis. Classical references such as~\cite{HornJohnson1985, HornJohnson1991} develop the
fundamental theory of operator norms, duality, and extremal structure of linear maps between
Banach spaces, providing essential analytic tools for understanding variational formulations of
$\|A\|_{q \to r}$. Likewise, the monographs~\cite{BenIsraelGreville1974, GolubVanLoan} offer
foundational analytic and geometric perspectives relevant to induced-norm analysis.

One of the earliest systematic efforts to estimate induced matrix norms is due to
\cite{Higham1992}, who introduced practical numerical schemes for approximating the matrix
$p$-norm. His approach, based on generalized power iterations and iteratively reweighted
least squares, yields accurate numerical estimates but does not provide exact values. The case $2 \to r$ also connects to a substantial literature in harmonic analysis and
probability. Classical results of~\cite{Nelson1973, Beckner1975, Gross1975} establish sharp
hypercontractive inequalities for Gaussian and Boolean settings, thereby yielding exact operator
norms for certain infinite-dimensional analogues of the finite-dimensional maps considered here.
These works have had considerable influence on the development of discrete hypercontractivity and
its connections to induced norms of matrices.

%This
%work illustrates a central analytic difficulty: the maximization problem defining
%$\|A\|_{q \to r}$ is inherently non-convex and may contain multiple stationary points, 
%complicating attempts at exact characterization.

There is additionally a significant body of research on the computational hardness of
induced norms. Foundational inapproximability results due to~\cite{Hastad1997}, together with
later advances by~\cite{Khot2002, steinberg2005computation,bhattiprolu2023inapproximability}, show that many $q \to r$ induced norms are
computationally difficult to approximate; in particular, exact computing of $\|\bA\|_{q\rightarrow r}$ for $1\leq r<q\leq \infty$ and of $\|\bA\|_{q\rightarrow q}$ for $q\notin \{1,2,\infty\}$ are both NP-hard. Furthermore, the Grothendieck inequality and its
semidefinite relaxations, as analyzed in~\cite{BrietOliveiraVallentin2011}, highlight deep
connections between induced norms, optimization theory, and computational complexity. These
hardness results underscore that exact computation of $\|A\|_{q \to r}$ is impossible in full
generality unless major complexity-theoretic conjectures fail.

%Beyond the classical cases $(q=r=1,2,\infty)$, determining the exact value of $\|\bA\|_{q\to r}$ has remained a difficult analytic
%problem. The maximizing vector arises from a nonlinear, non‑convex
%variational equation whose extremal structure is poorly understood.
%Consequently, the literature has focused primarily on
%\emph{approximating} this quantity or providing upper and lower bounds.

More recent developments address the broader $q \to r$ problem. The work of
\cite{GuthMaldagueUrschel2024} presents a new analytic framework showing that structural
information contained in the rows of a matrix can significantly refine classical interpolation
bounds. Their results strengthen long-standing approximation exponents for norms such as the
$2 \to 4$ norm, which plays a central role in hypercontractivity, moment inequalities, and
spectral expansion. Although these contributions remain approximation-theoretic, they represent
the sharpest analytic bounds currently available.

Determining analytically explicit values of induced norms is important because it provides precise information about how operators amplify inputs, which is essential for analyzing stability, error propagation, and sensitivity. Exact expressions lead to sharper bounds in numerical analysis and optimization, improve the assessment of conditioning, and avoid the conservatism of approximate estimates. This allows for more accurate theoretical results and more efficient and reliable computational methods. Despite its paramount importance, there remain
essentially no purely analytic results that yield the \emph{exact} value of
$\|A\|_{q \to r}$ outside the classical cases $q, r \in\{ 1, 2, \infty\}$ \cite{lewis2023top}. The aforementioned existing works either
provide numerical approximations or analytic upper and lower bounds, but none offer a structural
characterization that resolves the underlying non-convex maximization problem exactly. The difficulty stems from the fact that the maximization problem~\eqref{eq:def} is typically 
highly non-convex and may admit multiple stationary points. To the best of our knowledge, only the recent paper \cite{bouthat2023norm} addresses this problem, focusing on certain classes of circulant matrices.

The present work fills precisely this gap by developing an analytic framework that determines the
induced norm exactly for all $q, r \geq 1$ for several important classes of matrices $\mathbf{A}$. 
\subsection*{Summary of the main results}
The main results of the present paper can be summarized as follows. We explicitly calculate the induced norm $\|\bA\|_{q\rightarrow r}$ for the following class of matrices:
\begin{enumerate}
    \item[(i)] diagonal and rank-one matrices (Lemmas \ref{lem: diagonal} and \ref{lem: rank1}),
    \item[(ii)] Vandermonde matrices whose columns are certain powers of vector with all non-zero entries (Theorem \ref{thm: Vandermonde}),
    \item[(iii)] a class of orthonormal matrices that contains Hadamard matrices (Theorem \ref{thm: Hadamard},
    \item[(iv)] A class of matrices having a specific SVD (singular value decomposition) structure (Theorem \ref{thm: SVD decomposition}),
    \item[(v)] A class of shear matrices and their extensions (Theorems \ref{thm:rank1shear} and \ref{thm: SVDrank1shear}),
    \item[(vi)] A class of $n\times n$ matrices in which each row has exactly $k$ $(1\leq k\leq n$) identity entries, covering bi-diagonal Toeplitz matrices (Theorem \ref{thm: Toeplitz}),
    \item[(vii)] Orthogonal matrices and its extensions (Theorems \ref{thm: Orthogonal} and \ref{thm: orthogonalSVD}),
    \item[(viii)] The induced norm $\|\bA\|_{1\rightarrow r}$ for all $\bA\in\bbR^{m\times n}$ (Theorem \ref{thm: 1rnorm}).
\end{enumerate}
For each class, we resolve the exact formula for the induced norm by identifying the extremal structure of maximizers, analyzing the geometry of the constraint set, and proving
uniqueness and optimality properties of solutions to the associated variational system. 
\subsection*{Connection to Grothendieck problem }
The induced norm is strongly connected to the celebrated Grothendieck problem (inequality), which is of major importance in many different fields, ranging from Banach
space theory to combinatorial optimization and quantum information theory, see for instance \cite{khot2012grothendieck,pisier2012grothendieck} for a detailed account of the topics.
For $\bA\in\bbR^{m\times n}$ and $p,q\geq 1$, the $(p,q)$-Grothendieck problem is the following optimization problem \cite{bhattiprolu2023inapproximability}
\[
G_{\bA}(q,r):=\sup_{\|y\|_{p}=1}\sup_{\|x\|_q=1}\langle y, \bA x\rangle.
\]
The original Grothendieck problem is precisely $(\infty,\infty)$-Grothendieck problem. By \cite[Observation 2.4]{bhattiprolu2023inapproximability} we have the following relation
\[
\|\bA\|_{q\rightarrow r}=G_{\bA}(r^*,q)=\|\bA^T\|_{r^*\rightarrow q^*},\quad q^*=\frac{q}{q-1}.
\]
Because of the above relation, our results simultaneously resolve the $(r^*,q)$-Grothendieck problem, providing exact values for $G_{\bA}(r^*,q)$, for the same class of matrices in $(i)-(viii)$.

\subsection*{Organization of the paper} Section \ref{sec: main results} presents the precise statements and proofs of the main results. Section \ref{sec:summary} provides further discussions and outlook.

%In this paper we address the following fundamental problem:
%\begin{quote}
%\emph{For arbitrary $q, r \ge 1$ and arbitrary matrices $\bA \in \bbR^{m \times n}$, can the induced norm 
%$\|\bA\|_{q \to r}$ be determined exactly by a purely analytic argument?}
%\end{quote}

%We answer this question in the affirmative. We establish a short and surprisingly simple analytic 
%framework that identifies the extremal vectors achieving the supremum in~\eqref{eq:def}, thereby 
%yielding an exact formula for $\|A\|_{q \to r}$ for all $q, r \ge 1$ and some classes of matrices $A$. 

%In this work, we present a purely mathematical result: we
%establish its precise value. Our arguments
%are analytic rather than computational: we do not introduce algorithms
%or numerical procedures, nor do we rely on approximation methods.
%Instead, 

%This result resolves a question that has remained open for many years:
%whether the exact induced norm $\|\bA\|_{q\to r}$ can be
%determined beyond the classical special cases. Our work gives a complete
%affirmative answer for all $q, r\geq 1$ and for all matrix
%classes.

%\section{Related Work}\label{related-work}

%This resolves a long-standing open problem in the theory of induced norms.

\section{Main Results}
In this section, we present the main results and their proofs.
\label{sec: main results}
\subsection{Diagonal and rank-one matrices}
We start with the simplest class of matrices where the induced norm $\|\cdot\|_{q\rightarrow r}$ can be computed explicitly, namely diagonal matrices and rank-one matrices.
\begin{lemma}
\label{lem: diagonal}
Let $\bA=\mathrm{diag}(a_1,\ldots, a_n)$ be a diagonal matrix. Then
\begin{equation}
 \|\bA\|_{q\rightarrow r}=\begin{cases}
 \max\limits_{i}|a_i|\quad\text{if}\quad q\leq r,\\
\Big(\sum_{i=1}^n |a_i|^{qr/(q-r)}\Big)^{\frac{1}{r}-\frac{1}{q}}\quad\text{if}\quad q> r.
\end{cases}
\end{equation}
\end{lemma}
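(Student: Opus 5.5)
The plan is to treat the two regimes separately. In each we prove a matching upper bound for $\|\bA\bx\|_r$ over $\|\bx\|_q\le 1$ and exhibit an extremal vector. Throughout write $\|\bA\bx\|_r^r=\sum_i |a_i|^r|x_i|^r$, and let $\be_i$ denote the standard basis vectors. Since the answer depends only on $|a_i|$, we may assume $a_i\ge 0$.

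\emph{Case $q\le r$.} For the lower bound, take $\bx=\be_{i^*}$ with $i^*\in\arg\max_i|a_i|$, which gives $\|\bA\bx\|_r=\max_i|a_i|$. For the upper bound, use $\|\bA\bx\|_r\le \max_i|a_i|\,\|\bx\|_r$ together with the monotonicity of $\ell^s$ norms: $\|\bx\|_r\le\|\bx\|_q$ for $r\ge q$. This is standard, since after normalizing $\|\bx\|_q=1$ each $|x_i|\le1$, so $\sum|x_i|^r\le\sum|x_i|^q=1$. This case is immediate.

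\emph{Case $q>r$.} Set $u_i=|x_i|^r$, so the constraint becomes $\sum_i u_i^{q/r}\le 1$ and the objective is $\sum_i a_i^r u_i$. Apply H\"older with exponents $s=q/r>1$ and $s^*=q/(q-r)$:
\[
\sum_i a_i^r u_i\le\Big(\sum_i a_i^{rq/(q-r)}\Big)^{(q-r)/q}\Big(\sum_i u_i^{q/r}\Big)^{r/q}\le \Big(\sum_i a_i^{qr/(q-r)}\Big)^{(q-r)/q}.
\]
Taking $r$-th roots yields exactly the claimed bound with exponent $\frac{1}{r}-\frac{1}{q}$.

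For attainment, use the H\"older equality condition: choose $u_i\propto a_i^{rs^*}$, equivalently $|x_i|\propto a_i^{r/(q-r)}$, normalized so that $\|\bx\|_q=1$. Checking that this vector achieves equality is a short computation. The only care needed is the degenerate case where all $a_i=0$, in which both sides vanish. The main (mild) obstacle is simply bookkeeping the exponents correctly in the H\"older step and in the equality case.
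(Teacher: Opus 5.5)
Your proposal is correct and follows the paper's proof essentially step for step. For $q\le r$ you use $|x_i|\le 1$, so $\sum_i|x_i|^r\le\sum_i|x_i|^q$, and take a standard basis vector at the largest $|a_i|$. For $q>r$ you apply H\"older with exponents $q/r$ and $q/(q-r)$, with equality at $|x_i|\propto|a_i|^{r/(q-r)}$ normalized in $\ell^q$; your separate treatment of the degenerate case $\bA=0$ is a small point the paper leaves implicit.
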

\begin{proof}
Since $\bA x=\mathrm{diag}(a_1 x_1,\ldots, a_n x_n)$, we have
\[
\|\bA x\|^r_r=\sum_{i=1}^n|a_i x_i|^r.
\]
\textbf{Case 1: $r\geq q$}. Since $\|x\|_q=1$, we have $|x_i|\leq 1$ for all $i=1,\ldots, n$. It follows that
\[
|x_i|^r\leq |x_i|^q.
\]
Thus
\[
\|\bA x\|^r_r=\sum_{i=1}^n|a_i|^r |x_i|^r\leq (\max_{i}|a_i|^r)\sum_{i=1}^n |x_i|^q=\max_{i}|a_i|^r.
\]
Suppose $|a_k|^r=\max_{i}|a_i|^r$. So $\|\bA x\|_{r}^r\leq |a_k|^r$. In addition, for $x_*=e_k$, then we have $\bA x_*=a_k e_k$ and $\|\bA x_*\|^r_{r}=|a_k|^r$. Hence
\[
\|\bA\|_{q\to r}=|a_k|.
\]    
\textbf{Case 2: $r< q$}. Define $p=\frac{q}{r}>1$ and let $p'=\frac{p}{p-1}=\frac{q}{q-r}$. Then $p$ and $p'$ are Holder conjugate
\[
\frac{1}{p}+\frac{1}{p'}=1.
\]
Let $x\in\mathbb{R}^d$ such that $\|x\|_q\leq 1$. By Hölder inequality we have
\begin{align*}
\|\bA x\|^r_r=\sum_{i=1}^n|a_i x_i|^r&\leq \Big(\sum_{i=1}^n (|a_i|^r)^{p'}\Big)^{1/p'}\Big(\sum_{i=1}^n (|x_i|^r)^{p}\Big)^{1/p}
\\&=\Big(\sum_{i=1}^n |a_i|^{qr/(q-r)}\Big)^{(q-r)/q}\Big(\sum_{i=1}^n |x_i|^{q}\Big)^{1/p}
\\&\leq \Big(\sum_{i=1}^n |a_i|^{qr/(q-r)}\Big)^{(q-r)/q}.
\end{align*}
It follows that
\[
\|\bA\|_{q\rightarrow r}=\sup_{\|x\|_q\leq 1 }\|\bA x\|_r\leq \Big(\sum_{i=1}^n |a_i|^{qr/(q-r)}\Big)^{(q-r)/(qr)}=\Big(\sum_{i=1}^n |a_i|^{qr/(q-r)}\Big)^{\frac{1}{r}-\frac{1}{q}}. 
\]
The equality happens when
\[
x_i^*
=
\frac{|a_i|^{\frac{r}{q-r}}}{\left(\sum_{j=1}^n |a_j|^{\frac{qr}{q-r}}\right)^{1/q}}
\quad \text{for } i=1,\dots,n.
\]
Thus
\[
\|\bA\|_{q\rightarrow r}=\Big(\sum_{i=1}^n |a_i|^{qr/(q-r)}\Big)^{\frac{1}{r}-\frac{1}{q}}. 
\]
This finishes the proof of this lemma.
\end{proof}
\begin{lemma}
\label{lem: rank1}
Suppose $\bA$ is a rank-one matrix, that is, there exist $u\in \mathbb{R}^m$ and $v\in\mathbb{R}^n$ such that $\bA = uv^T$. Then
\[
\|\bA\|_{q\rightarrow r}=\|u\|_r \|v\|_{q^*}.
\]
\end{lemma}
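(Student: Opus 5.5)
The plan is to reduce everything to the scalar quantity $v^T\bx$. For any $\bx\in\bbR^n$ we have $\bA\bx = u(v^T\bx)$, hence
\[
\|\bA\bx\|_r = |v^T\bx|\,\|u\|_r .
\]
Taking the supremum over $\|\bx\|_q\le 1$ then gives
\[
\|\bA\|_{q\to r} = \|u\|_r \sup_{\|\bx\|_q\le 1}|v^T\bx| .
\]
It therefore suffices to show that $\sup_{\|\bx\|_q\le 1}|v^T\bx| = \|v\|_{q^*}$, which is the standard duality of $\ell_q$ and $\ell_{q^*}$ norms. Here $q^*=q/(q-1)$, with the convention $q^*=\infty$ when $q=1$.

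For the upper bound I would apply Hölder's inequality, $|v^T\bx|\le \|v\|_{q^*}\|\bx\|_q\le \|v\|_{q^*}$. This holds for all $q\ge 1$, including $q=1$, where it reads $|\sum v_i x_i|\le \max_i|v_i|\sum_i|x_i|$.

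For the matching lower bound I would exhibit an extremal vector, splitting into two cases. If $v=0$, both sides vanish and there is nothing to prove. If $q>1$ and $v\neq 0$, take
\[
x_i^* = \frac{\sgn(v_i)\,|v_i|^{q^*-1}}{\|v\|_{q^*}^{q^*-1}} .
\]
Using $(q^*-1)q=q^*$, one checks $\|\bx^*\|_q=1$, and then $v^T\bx^* = \|v\|_{q^*}^{q^*}/\|v\|_{q^*}^{q^*-1}=\|v\|_{q^*}$. If $q=1$, take $\bx^*=\sgn(v_k)e_k$ with $|v_k|=\max_i|v_i|$; this gives $v^T\bx^*=\|v\|_\infty$. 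In both cases the extremal vector achieves the upper bound, so combining with the first step yields $\|\bA\|_{q\to r}=\|u\|_r\|v\|_{q^*}$.

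No step is a genuine obstacle. The only care needed is in the boundary case $q=1$ (and $q=\infty$, if it is allowed, where $q^*=1$ and $x_i^*=\sgn(v_i)$ works), and in verifying the normalization of $\bx^*$ via the exponent identity $(q^*-1)q=q^*$.
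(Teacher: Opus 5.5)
Your proposal is correct and follows the paper's proof essentially verbatim: you reduce via $\|\bA\bx\|_r = |v^T\bx|\,\|u\|_r$, apply Hölder for the upper bound, and use the same extremal vector, since your normalization $\|v\|_{q^*}^{q^*-1}$ equals the paper's $\bigl(\sum_i |v_i|^{q^*}\bigr)^{1/q}$ because $q^*/q = q^*-1$. Your explicit handling of $v=0$ and $q=1$ is a small improvement, since the paper's formula for $x^*$ is undefined in those cases.
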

\begin{proof}
We have
\[ \|\bA x\|_r = \|(uv^T)x\|_r = |v^T x| \cdot \|u\|_r \]
Thus
\[ 
\|\bA\|_{q\rightarrow r}=\|u\|_r \cdot\sup_{\|x\|_q\leq 1} |v^T x|  \]
By Hölder's inequality:
\[ |v^T x| \leq \|v\|_{q^*} \|x\|_q \]
where $\frac{1}{q} + \frac{1}{q^*} = 1$. The equality is attained when 
\[
x_i^*=\frac{\mathrm{sign} (v_i)|v_i|^{q^*-1}}{\Big(\sum_{i=1}^n|v_i|^{q^*}\Big)^{1/q}}
\]
Thus
\[ \|\bA\|_{q \to r} = \|u\|_r \|v\|_{q^*}. \]   
\end{proof}
\subsection{Vandermonde matrices}
In this section, we analytically compute the induced norm $\|\bA\|_{q\rightarrow r}$ for a special class of Vandermonde matrices whose columns are certain powers of a given vector with all non-zero entries. To this end, we need two axillary lemmas which are of independent interest.

The following lemma demonstrates that the induced norm remains unchanged under the interchange of any two rows or columns.
\begin{lemma} \label{key:lem} For any matrix $\bA \in \bbR^{m \times n}$, we have
\begin{align*}
\|\bA\|_{q \to r} =\|\bE \bA\|_{q \to r} = \|\bA \bF\|_{q\to r},
\end{align*}
where $\bE \in \bbR^{m \times m}$ is a row exchange matrix, and $\bF \in \bbR^{n \times n}$ is a column exchange matrix. 
\end{lemma}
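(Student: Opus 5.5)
The plan is to use the fact that permutation matrices are isometries of every $\ell_s$ norm, $s\ge 1$. A row exchange matrix $\bE\in\bbR^{m\times m}$ (and likewise a column exchange matrix $\bF\in\bbR^{n\times n}$) is a permutation matrix. For any $\by\in\bbR^m$, $\bE\by$ has the same entries as $\by$, only reordered. Since $\|\by\|_s=(\sum_i|y_i|^s)^{1/s}$ is a symmetric function of the entries, $\|\bE\by\|_s=\|\by\|_s$. I will record this as the first step. I will also note that $\bE$ and $\bF$ are invertible, with $\bF^{-1}=\bF^T$ again a permutation matrix (indeed $\bF^{-1}=\bF$ for a single transposition).

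For the row statement, take any $\bx$ with $\|\bx\|_q\le 1$. Applying the first step with $s=r$ gives $\|\bE\bA\bx\|_r=\|\bA\bx\|_r$. The two suprema in \eqref{eq:def} therefore range over identical sets of values, so $\|\bE\bA\|_{q\to r}=\|\bA\|_{q\to r}$.

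For the column statement, I will change variables with $\bz=\bF\bx$. Because $\bF$ is a bijection of $\bbR^n$ that preserves $\|\cdot\|_q$, the map $\bx\mapsto\bF\bx$ sends the unit ball $\{\|\bx\|_q\le1\}$ bijectively onto itself. Hence
\[
\sup_{\|\bx\|_q\le 1}\|\bA\bF\bx\|_r=\sup_{\|\bz\|_q\le 1}\|\bA\bz\|_r=\|\bA\|_{q\to r}.
\]

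There is no real obstacle. The only point needing care is the column case, where one must check that the reparametrization is onto the unit ball and not merely into it. This follows from invertibility of $\bF$, together with $\bF^{-1}$ also being a permutation and hence norm-preserving. The same argument extends verbatim to arbitrary permutation matrices and to signed permutations, which may be useful later in the paper.
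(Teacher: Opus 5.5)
Your proposal is correct and follows essentially the same route as the paper: the row case uses that the entries of $\bE\bA\bx$ are a reordering of those of $\bA\bx$, and the column case reparametrizes $\bx$ by a permutation of its coordinates, which preserves the $\ell_q$ unit ball. Your version is slightly more careful than the paper's: you state explicitly that the reparametrization is onto the unit ball, whereas the paper's step \eqref{amat3} only records the inclusion $\|\by\|_q\le 1$ whenever $\|\bx\|_q\le 1$.
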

\begin{proof}
Let $\be_1^T, \be_2^T, \cdots, \be_m^T$ be the rows of $\bE$ which is a row exchange matrix.
Assume that $\ba_1^T, \ba_2^T, \cdots, \ba_m^T$ are the rows of $\bA$. Then, we have
\begin{align}
\|\bA\|_{q \to r}^r&=\sup_{\bx: \|\bx\|_q \leq 1} \|\bA \bx\|_q^r\notag\\
&= \sup_{\bx: \|\bx\|_q \leq 1} \sum_{i=1}^m |\ba_i^T \bx|^r\notag\\
&= \sup_{\bx: \|\bx\|_q \leq 1} \sum_{j=1}^m |\be_j^T \bA \bx|^r \label{amat1}\\
&= \sup_{\bx: \|\bx\|_q \leq 1} \|\bE \bA \bx\|_r^r\notag\\
&= \|\bE \bA\|_r^r,
\end{align} where \eqref{amat1} follows from the fact that $ \{\be_j^T \bA\}_{j=1}^m$ is a (row) permutation of $\{\ba_1^T, \ba_2^T, \cdots, \ba_m^T\}$. 
 
Similarly, let $\bF$ be a column exchange matrix and $\mathbf{f}_1,\mathbf{f}_2,\cdots, \mathbf{f}_n$ be the columns of $\bF$. Assume that $\bb_1, \bb_2, \cdots, \bb_n$ are the columns of $\bA$. Then, we have
\begin{align}
\|\bA \|_{q \to r}&= \sup_{\bx: \|\bx\|_q \leq 1} \|\bA \bx\|_r\notag\\
&=  \sup_{\bx: \|\bx\|_q \leq 1} \bigg\|\sum_{i=1}^n \bb_i x_i \bigg\|_r\notag\\
&= \sup_{\bx: \|\bx\|_q \leq 1} \bigg\|\sum_{j=1}^n \bA\mathbf{f}_j y_j \bigg\|_r \label{amat2}\\
&= \sup_{\by: \|\by\|_q \leq 1} \bigg\|\sum_{j=1}^n \bA\mathbf{f}_j y_j \bigg\|_r \label{amat3}\\
&= \sup_{\by: \|\by\|_q \leq 1} \big\|\bA \bF \by \big\|_r \label{amat4}\\
&= \|\bA \bF\|_{q \to r},\notag
\end{align} where \eqref{amat2} follows from the fact that $\{\bA\mathbf{f}_1, \bA \mathbf{f}_2, \cdots, \bA \mathbf{f}_n\}$ is a (column) permutation of $\{\bb_1,\bb_2, \cdots, \bb_n\}$, and $(y_1,y_2, \cdots, y_n)$ is a permutation of $(x_1, x_2,\cdots,x_n)$, \eqref{amat3} follows from the fact that $\|\by\|_q \leq 1$ if $\|\bx\|_q \leq 1$. 
\end{proof}
The next lemma provides an upper bound for the induced norm when $\bA$ has one row with all non-zero entries.
\begin{lemma} 
\label{lem: nonzero}
Let $\bA\in \bbR^{m \times n}$ be a matrix with at least one row whose entries are all non-zero, without loss of generality, assuming that this row is $\ba_1$. Then, it holds that
\begin{align}
\label{inequality}
\|\bA\|_{q \to r}^r&\leq \|\ba_1\|_p^r  + \sum_{i=2}^m\|\bb_i\|_{p_i'}^r \|\ba_i\|_{\big(\frac{p_i'}{p_i'+\frac{q_i'q}{q-q_i'}}\big) \big(\frac{q_i' q}{(q-q_i')}\big)}^{\frac{p_i' r}{p_i'+\frac{q_i' q}{q-q_i'}}},
\end{align}
where  $p, p_i', q_i' \geq 1$ (for all $i \in [m]$) are arbitrarily chosen real numbers such that 
\begin{align*}
\frac{1}{p}+\frac{1}{q}=1,\quad\frac{1}{p_i'}+\frac{1}{q_i'}=1,\quad
q_i' \leq q. 
\end{align*}
\end{lemma}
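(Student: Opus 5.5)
The plan is to bound $\|\bA\bx\|_r^r=\sum_{i=1}^m|\ba_i^T\bx|^r$ row by row for an arbitrary $\bx$ with $\|\bx\|_q\le 1$. Each row gets a Hölder estimate whose right-hand side no longer depends on $\bx$. Taking the supremum over $\bx$ then gives \eqref{inequality} directly. The first row is the easy one: by Hölder with the conjugate pair $(p,q)$, $|\ba_1^T\bx|\le\|\ba_1\|_p\|\bx\|_q\le\|\ba_1\|_p$, which produces the term $\|\ba_1\|_p^r$.

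For $i\ge 2$ I will use the hypothesis that every entry of $\ba_1$ is non-zero. I read the vectors $\bb_i$ in the statement as the coefficients of an entrywise factorization of $\ba_i$ that is legitimate because no entry of $\ba_1$ vanishes. Concretely, I expect the factorization $a_{ij}=b_{ij}\,c_{ij}$, where $c_{ij}$ collects a power of $|a_{ij}|$ chosen to balance the exponents in the stated bound. Writing $\ba_i^T\bx=\sum_j b_{ij}(c_{ij}x_j)$, the estimate for row $i$ proceeds in two Hölder steps.
\begin{enumerate}
\item Apply Hölder with the pair $(p_i',q_i')$. This gives $|\ba_i^T\bx|\le\|\bb_i\|_{p_i'}\big(\sum_j|c_{ij}|^{q_i'}|x_j|^{q_i'}\big)^{1/q_i'}$.
\item Since $q_i'\le q$, apply Hölder again with the exponents $q/q_i'$ and $q/(q-q_i')$. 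This gives
\[
\sum_j|c_{ij}|^{q_i'}|x_j|^{q_i'}\le\Big(\sum_j|c_{ij}|^{\frac{q_i'q}{q-q_i'}}\Big)^{\frac{q-q_i'}{q}}\|\bx\|_q^{q_i'}.
\]
\end{enumerate}
The second step is exactly where the constraint $q_i'\le q$ is used and where the factor $\frac{q_i'q}{q-q_i'}$ appearing in the statement comes from.

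After these two steps, raise the row-$i$ bound to the power $r$, substitute the definition of $c_{ij}$ as a power of $|a_{ij}|$, and rewrite the result as a norm of $\ba_i$. The target form is $\|\ba_i\|_s^{t}$ with $s=\frac{p_i'}{p_i'+\frac{q_i'q}{q-q_i'}}\cdot\frac{q_i'q}{q-q_i'}$ and $t=\frac{p_i'r}{p_i'+\frac{q_i'q}{q-q_i'}}$. The exponent $\frac{p_i'}{p_i'+\frac{q_i'q}{q-q_i'}}$ indicates that the power in $c_{ij}$ is chosen to equalize the two Hölder factors, making the split between $\bb_i$ and $\ba_i$ "harmonic". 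I will verify this choice by checking that the exponents match identically. Summing the first-row term and the row-$i$ terms over $i$, and then taking the supremum over $\bx$, gives \eqref{inequality}.

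I expect the main obstacle to be this exponent bookkeeping. It requires pinning down the precise relation between $\bb_i$, $\ba_i$ and $\ba_1$ so that the two Hölder steps produce exactly the power $t$ of $\|\ba_i\|_s$, rather than a mixed expression involving both $\ba_1$ and $\ba_i$. My first attempt will be to solve for the power $\alpha$ in $c_{ij}=|a_{ij}|^{\alpha}$ that makes the exponents match. If that fails, I will instead take $\bb_i$ to be the entrywise ratio $a_{ij}/a_{1j}$, which uses the non-vanishing entries of $\ba_1$ directly, and redo the balancing with the entries of $\ba_1$ absorbed into the first Hölder step.
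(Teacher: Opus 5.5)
Your plan is the paper's proof. The paper defines $\bb_i$ to have entries $a_{ij}|a_{ij}|^{-\theta}$ (and $0$ where $a_{ij}=0$) with $\theta=\frac{p_i'}{p_i'+\frac{q_i'q}{q-q_i'}}$, which is exactly your factorization with $c_{ij}=|a_{ij}|^{\theta}$. It then applies the same two H\"older steps, so your first attempt $\alpha=\theta$ already settles the exponent bookkeeping: with $s'=\frac{q_i'q}{q-q_i'}$ the two steps give $\big(\sum_j|a_{ij}|^{\alpha s'}\big)^{r/s'}=\|\ba_i\|_{\alpha s'}^{\alpha r}$.

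The non-vanishing of $\ba_1$ plays no role in this inequality; it matters only in the equality case used for Theorem \ref{thm: Vandermonde}. So the fallback with ratios $a_{ij}/a_{1j}$ is unnecessary, and it would not produce the stated form anyway.
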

\begin{proof} Assume that
\begin{align*}
\bA=\begin{bmatrix} \ba_1^T \\ \ba_2^T \\ \vdots \\ \ba_m^T \end{bmatrix}
\end{align*} where $\ba_i^T \in \bbR^n$ for all $i \in [m]:=\{1,2,\ldots, m\}$ be the $i$-th row of $\bA$.  By definition, we have
\begin{align}
\|\bA\|_{q \to r}^r=\sup_{\bx: \|\bx\|_q \leq 1} \|\bA\bx\|_r^r=\sup_{\bx: \|\bx\|_q \leq 1} \sum_{i=1}^m |\ba_i^T \bx|^r \label{amen1}.
\end{align}
From \eqref{amen1}, it follows that $\|\bA\|_{q \to r}$ is invariant with respect to the permutation of the rows' indices (see also Lemma \ref{key:lem}).  Hence, without loss of generality,  we can assume that $\ba_1$ is the row with all non-zero elements. Then, we have
\begin{align*}
\label{bi}
\ba_i^T \bx&= \bb_i \by_i, \qquad \forall i \in \{2,3,\cdots,m\},
\end{align*} where
\begin{align}
\bb_i=\ba_i^T \begin{bmatrix} \beta_{1,1}(i) &0&\cdots &0&0\\ 0& \beta_{1,2}(i) &\cdots &0&0 \\ \vdots &\vdots &\ddots &\vdots &\vdots \\  0&0 &\cdots &\beta_{1,n-1}(i)&0\\ 0&0&\cdots &0&\beta_{1,n}(i) \end{bmatrix},
\end{align}
and
\begin{align*}
\by_i=\begin{bmatrix} \tila_{1,1}(i) x_1 \\ \tila_{1,2}(i) x_2 \\ \vdots \\ \tila_{1,n-1}(i) x_{n-1} \\ \tila_{1,n}(i) x_n \end{bmatrix},
\end{align*}
where
\begin{align*}
\beta_{1,j}(i)&=\begin{cases} \frac{1}{\tila_{1,j}(i)}, \qquad \mbox{if} \qquad a_{i,j} \neq 0\\ 0, \qquad \mbox{otherwise}\end{cases},\\
\tila_{1,j}(i)&= |a_{i,j}|^{\frac{p_i'}{p_i'+\frac{q_i' q}{q-q_i'}}}.
\end{align*}
\end{proof}
Hence, we have
\begin{align}
\sum_{i=1}^m |\ba_i^T \bx|^r &=|\ba_1^T \bx|^r+ \sum_{i=2}^m |\bb_i^T \by_i|^r\notag\\
&\leq \|\ba_1\|_p^r \|\bx\|_q^r +  \sum_{i=2}^m \|\bb_i\|_{p_i'}^r \|\by_i\|_{q_i'}^r \label{eq8},
\end{align} where $p, p_i', q_i' \geq 1$ (for all $i \in [m]$) are arbitrarily chosen such that 
\begin{align*}
\frac{1}{p}+\frac{1}{q}=1,\quad\frac{1}{p_i'}+\frac{1}{q_i'}=1,\quad
q_i' \leq q. 
\end{align*}
Now, under the condition $\|\bx\|_q \leq 1$ and by Hölder's inequality we have
\begin{align}
\|\by_i\|_{q_i'}^{q_i'}&=\sum_{j=1}^n |\tila_{1,j}(i)|^{q_i'} |x_j|^{q_i'}\notag \\
&\leq \bigg(\sum_{j=1}^n |\tila_{1,j}(i)|^{q_i' q/(q-q_i')}\bigg)^{(q-q_i')/q} \bigg(\sum_{j=1}^n |x_j|^q \bigg)^{q_i'/q}\notag\\
&= \big(\sum_{j=1}^n |\tila_{1,j}(i)|^{q_i' q/(q-q_i')}\bigg)^{(q-q_i')/q}  \|\bx\|_q^{q_i'}\notag \\
&\leq \bigg(\sum_{j=1}^n |\tila_{1,j}(i)|^{q_i' q/(q-q_i')}\bigg)^{(q-q_i')/q} \notag\\
&= \bigg(\sum_{j=1}^n |a_{i,j}|^{\big(\frac{p_i'}{p_i'+\frac{q_i'q}{q-q_i'}}\big) q_i' q/(q-q_i')}\bigg)^{(q-q_i')/q} \notag\\
&= \|\ba_i\|_{\big(\frac{p_i'}{p_i'+\frac{q_i'q}{q-q_i'}}\big) \big(\frac{q_i' q}{(q-q_i')}\big)}^{\frac{p_i' q_i'}{p_i'+\frac{q_i' q}{q-q_i'}}}
 \label{eq15}.
\end{align}
From \eqref{amen1}, \eqref{eq8}, and \eqref{eq15} we obtain
\begin{align}
\|\bA\|_{q \to r}^r&\leq \|\ba_1\|_p^r  + \sum_{i=2}^m\|\bb_i\|_{p_i'}^r \|\ba_i\|_{\big(\frac{p_i'}{p_i'+\frac{q_i'q}{q-q_i'}}\big) \big(\frac{q_i' q}{(q-q_i')}\big)}^{\frac{p_i' r}{p_i'+\frac{q_i' q}{q-q_i'}}} \label{eq16}.
\end{align}
The equality in \eqref{eq16} happens if and only if
\begin{align*}
x_j = \frac{\mbox{sgn}(a_{i,j}) |a_{i,j}|^{\big(\frac{p_i'}{p_i'+\frac{q_i' q}{q-q_i'}}\big) \big(\frac{q_i' }{(q-q_i')}\big)}}{ \|\ba_i\|_{\big(\frac{p_i'}{p_i'+\frac{q_i'q}{q-q_i'}}\big) \big(\frac{q_i' q}{(q-q_i')}\big)}^{\big(\frac{p_i'}{p_i'+\frac{q_i'q}{q-q_i'}}\big) \big(\frac{q_i' q}{(q-q_i')}\big)}}, \qquad \forall j \in [n], \forall i \in [2:m], 
\end{align*}
under the condition that
\begin{align}
\frac{\sgn(a_{1j})|a_{1j}|^p}{\|\ba_1\|_p^p}=\frac{\sgn(a_{ij})|a_{ij}|^{\big(\frac{p_i'}{p_i'+\frac{q_i' q}{q-q_i'}}\big) \big(\frac{q_i' q}{(q-q_i')}\big)}}{ \|\ba_i\|_{\big(\frac{p_i'}{p_i'+\frac{q_i'q}{q-q_i'}}\big) \big(\frac{q_i' q}{(q-q_i')}\big)}^{\big(\frac{p_i'}{p_i'+\frac{q_i'q}{q-q_i'}}\big) \big(\frac{q_i' q}{(q-q_i')}\big)}}, \qquad \forall j \in [n], \forall i \in [2:m] \label{amut}.
\end{align}
Note that \eqref{amut} happens, for example, if 
\begin{align*}
a_{ij}= \mbox{sgn}(a_{1,j}) |a_{1j}|^{1/\alpha_i},
\end{align*} for 
\begin{align*}
\alpha_i=\frac{1}{p}\big(\frac{p_i'}{p_i'+\frac{q_i' q}{q-q_i'}}\big) \big(\frac{q_i' q}{(q-q_i')}\big). 
\end{align*}
The proof of Lemma \ref{lem: nonzero} gives rise to the following theorem, which provides explicit formula for $\|\bA\|_{q\to r}$ for a specific class of Vandermonde matrices whose columns are certain powers of a given vector with all non-zero entries.
\begin{theorem}
\label{thm: Vandermonde}
Suppose $\bA$ is a Vandermonde matrix of the form
\begin{align*}
\bA=\begin{bmatrix} \ba_1^T \\ (\ba_1^{1/\alpha_1})^T\\ \vdots \\(\ba_1^{1/\alpha_m})^T  \end{bmatrix},
\end{align*}
for some sequence $\{q_i'\}_{i \in [m]}\geq 1$ and vector $\ba_1\in \bbR^n$, where
\begin{align*}
\alpha_i=\frac{1}{p}\big(\frac{p_i'}{p_i'+\frac{q_i' q}{q-q_i'}}\big) \big(\frac{q_i' q}{(q-q_i')}\big). 
\end{align*} 
Then
\begin{align*}
\|\bA\|_{q\to r}^r =\|\ba_1\|_p^r  + \sum_{i=2}^m\|\bb_i\|_{p_i'}^r \|\ba_1\|_p^{pr \frac{q-q_i'}{q_i' q}},
\end{align*}
where $\bb_i\in\bbR^{1\times n} $, $i=2,\ldots, m$ is defined in \eqref{bi}.
\end{theorem}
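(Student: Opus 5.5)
The plan is a two-sided sandwich. The upper bound comes from Lemma \ref{lem: nonzero}; the lower bound comes from exhibiting a feasible $\bx$ at which every Hölder inequality in that lemma's proof is tight.

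\textbf{Upper bound.} By Lemma \ref{key:lem} we may order the rows so that $\ba_1$, whose entries are all nonzero, comes first. Lemma \ref{lem: nonzero} then gives \eqref{inequality} with $\ba_i$ replaced by the $i$-th row $\ba_1^{1/\alpha_i}$, understood entrywise as $\sgn(a_{1j})|a_{1j}|^{1/\alpha_i}$. Write $s_i=\big(\frac{p_i'}{p_i'+\frac{q_i'q}{q-q_i'}}\big)\big(\frac{q_i'q}{q-q_i'}\big)$, so that $\alpha_i=s_i/p$. The row norm appearing in \eqref{inequality} is then
\[
\|\ba_1^{1/\alpha_i}\|_{s_i}=\Big(\sum_j |a_{1j}|^{s_i/\alpha_i}\Big)^{1/s_i}=\Big(\sum_j|a_{1j}|^{p}\Big)^{1/s_i}=\|\ba_1\|_p^{p/s_i}.
\]
Raising this to the exponent $\frac{p_i' r}{p_i'+\frac{q_i'q}{q-q_i'}}=r s_i\frac{q-q_i'}{q_i'q}$ gives $\|\ba_1\|_p^{pr\frac{q-q_i'}{q_i'q}}$. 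Substituting into \eqref{inequality} yields exactly the claimed right-hand side as an upper bound. This step is routine exponent bookkeeping.

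\textbf{Lower bound.} Take $x_j^*=\sgn(a_{1j})|a_{1j}|^{p-1}/\|\ba_1\|_p^{p-1}$. Then $\|\bx^*\|_q=1$ and $\ba_1^T\bx^*=\|\ba_1\|_p$, so the first Hölder step is tight. For $i\ge2$ I would check two further equality conditions.

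First, the Hölder step in \eqref{eq15} requires $|x_j|^q\propto|\tila_{1,j}(i)|^{q_i'q/(q-q_i')}=|a_{1j}|^{s_i/\alpha_i}=|a_{1j}|^p$. This holds since $|x_j^*|^q\propto|a_{1j}|^{(p-1)q}=|a_{1j}|^p$.

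Second, the Hölder step in \eqref{eq8} requires the vector $\by_i$, with entries $\tila_{1,j}(i)x_j^*$, to be aligned with $\bb_i$, whose entries are $a_{ij}/\tila_{1,j}(i)$. Concretely, this means $\sgn(y_{ij})|y_{ij}|^{q_i'}\propto \sgn(b_{ij})|b_{ij}|^{p_i'}$. Every quantity here is a signed power of $|a_{1j}|$ with a common sign pattern $\sgn(a_{1j})$, so the requirement reduces to matching exponents of $|a_{1j}|$. This is precisely the compatibility condition \eqref{amut}, and the choice $a_{ij}=\sgn(a_{1j})|a_{1j}|^{1/\alpha_i}$ was designed to enforce it.

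Once all equalities hold, $\|\bA\bx^*\|_r^r$ equals the upper bound and the theorem follows.

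\textbf{Main obstacle.} The hard step is verifying the second equality condition: that the exponent relation among $p_i'$, $q_i'$, $q$ implied by $\alpha_i$ forces $|b_{ij}|^{p_i'}$ and $|y_{ij}|^{q_i'}$ to share the same power of $|a_{1j}|$, and also that the signs agree. I would first reduce both sides to $|a_{1j}|^{\gamma}$ and solve for $\gamma$ explicitly. If the exponents do not match for arbitrary $q_i'$, I would instead read the theorem as asserting that the bound of Lemma \ref{lem: nonzero} is attained, and derive the required relation from \eqref{amut}. I would also have to check the sign of each $\ba_i^T\bx^*$: the absolute values $|\ba_i^T\bx^*|^r$ make sign differences harmless, provided all summands within each row carry a common sign, which they do here.
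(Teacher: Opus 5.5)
Your sandwich is the argument the paper intends; its proof just says the theorem follows from the proof of Lemma \ref{lem: nonzero} when equality holds. Your upper-bound bookkeeping and your first equality check are correct.

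The step you flag as the main obstacle is left open, and the reason you give for it is misplaced. \eqref{amut} compares the maximizer of row $1$ with the profile $|x_j|^q\propto|a_{ij}|^{s_i}$ coming from \eqref{eq15}, so it is exactly your \emph{first} check. The alignment of $\bb_i$ with $\by_i$ needed in \eqref{eq8} is a separate condition. Nothing in your write-up verifies it, which is why you end up hedging about reinterpreting the theorem.

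It closes with one identity. Put $t_i=\frac{q_i'q}{q-q_i'}$ and $\theta_i=\frac{p_i'}{p_i'+t_i}$, so that $\tila_{1,j}(i)=|a_{ij}|^{\theta_i}$ and $s_i=\theta_i t_i$. Then
\[
\frac{1}{s_i}=\frac{1}{p_i'}+\frac{1}{t_i}=\frac{1}{p_i'}+\frac{1}{q_i'}-\frac{1}{q}=1-\frac{1}{q}=\frac{1}{p}.
\]
Hence $(1-\theta_i)p_i'=s_i=p$ and $\theta_i q_i'=pq_i'/t_i=p-pq_i'/q$. The alignment condition $|b_{ij}|^{p_i'}\propto|y_{ij}|^{q_i'}$ reads $|a_{ij}|^{p}\propto|a_{ij}|^{\theta_i q_i'}|x_j|^{q_i'}$. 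This reduces to $|x_j|^q\propto|a_{ij}|^p$, which is the same as the \eqref{eq15} condition, so with matching signs it holds at your $\bx^*$.

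The identity says more. It gives $\alpha_i=s_i/p=1$ for every admissible choice of $p_i',q_i'$. So every row $\ba_1^{1/\alpha_i}$ is just $\ba_1$, and $\bA=(1,\dots,1)^T\ba_1^T$ has rank one. Consequently:
\begin{itemize}
\item $\ba_i^T\bx^*=\|\ba_1\|_p$ for every row, so all Hölder steps are trivially tight.
\item Each summand $\|\bb_i\|_{p_i'}^r\|\ba_1\|_p^{pr\frac{q-q_i'}{q_i'q}}=\|\ba_1\|_p^{rp(1/p_i'+1/t_i)}$ equals $\|\ba_1\|_p^r$.
\item The claimed value is $m^{1/r}\|\ba_1\|_p$, with the rows indexed $1,\dots,m$ as in the formula, which is exactly Lemma \ref{lem: rank1}.
\end{itemize}
No fallback reading is needed, but you should state this computation explicitly instead of appealing to \eqref{amut}.
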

\begin{proof} 
The proof of this theorem follows directly from the proof of Lemma \ref{lem: nonzero} when the equality in \eqref{inequality} holds.
\end{proof}
\subsection{Hadamard matrices}
In the following theorem, we calculate the induced norm $\|\cdot\|_{q\rightarrow r}$ for a general class of orthonormal matrices that contains Hadamard matrices.
\begin{theorem}
\label{thm: Hadamard}
Let $\bA \in \bbR^{m \times n}$ with rows $\ba_1^T, \ba_2^T, \cdots, \ba_m^T$. Then, the following holds:
%\begin{align}
%\|\bA\|_{q \to r} \leq \max_{1\leq i \leq m}\min\bigg\{\|\ba_i\|^{\frac{r-2}{r}}\sigma_{\max}(\bA)^{\frac{2}{r}} m^{\frac{q-2}{q r}},  \max_{1\leq i \leq m} \|\ba_i\|_p^{\frac{r-2}{r}} \sigma_{\max}(\bA)^{\frac{2}{r}} m^{\frac{q-2}{qr}} \bigg\} \label{laga},
%\end{align}
%\textcolor{red}{should it be
\begin{align}
\|\bA\|_{q \to r} \leq \min\bigg\{\max_{1\leq i \leq m}\|\ba_i\|_2^{\frac{r-2}{r}}\sigma_{\max}(\bA)^{\frac{2}{r}} n^{\frac{q-2}{2q }},  \max_{1\leq i \leq m} \|\ba_i\|_p^{\frac{r-2}{r}} \sigma_{\max}(\bA)^{\frac{2}{r}} n^{\frac{q-2}{qr}} \bigg\} \label{laga}
\end{align}
where $\sigma_{\max}(\bA)$ denotes the largest singular value of $\bA$.
As a corollary, for any orthonormal matrix $\bA$ such that there exists a row with all elements in $\{-1,+1\}/\sqrt{n}$. Then, it holds that
\begin{align*}
\|\bA\|_{q \to r} = n^{\frac{(q-2)}{2q}}.
\end{align*}
Note that the Hadamard matrix belongs to this class.
\end{theorem}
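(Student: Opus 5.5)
The plan is to prove \eqref{laga} by splitting the $r$-th power of $\|\bA\bx\|_r$ into an $\ell_\infty$ part and an $\ell_2$ part, and then to get the corollary by exhibiting a matching test vector. Throughout I will assume $q\ge 2$ and $r\ge 2$; this is the regime in which the exponents $\frac{r-2}{r}$ and $\frac{q-2}{2q}$ are nonnegative, and the statement is only meaningful there. As before, $p=q/(q-1)$ is the Hölder conjugate of $q$.

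\textbf{Step 1 (splitting).} For $\|\bx\|_q\le 1$ write
\[
\|\bA\bx\|_r^r=\sum_{i=1}^m |\ba_i^T\bx|^{r-2}\,|\ba_i^T\bx|^2\le \Big(\max_{1\le i\le m}|\ba_i^T\bx|\Big)^{r-2}\,\|\bA\bx\|_2^2 .
\]
This uses only $r\ge 2$. The quadratic factor is controlled by the spectral norm:
\[
\|\bA\bx\|_2^2\le \sigma_{\max}(\bA)^2\|\bx\|_2^2\le \sigma_{\max}(\bA)^2\, n^{\frac{q-2}{q}}.
\]
The last inequality is the standard comparison $\|\bx\|_2\le n^{\frac12-\frac1q}\|\bx\|_q$, which is Hölder's inequality and is valid for $q\ge 2$.

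\textbf{Step 2 (two bounds on the maximum).} I bound $\max_i|\ba_i^T\bx|$ in two ways.
\begin{itemize}
\item By Cauchy--Schwarz and the same norm comparison, $|\ba_i^T\bx|\le \|\ba_i\|_2\, n^{\frac12-\frac1q}$. Substituting into Step 1 gives
\[
\|\bA\bx\|_r^r\le \max_i\|\ba_i\|_2^{r-2}\,\sigma_{\max}(\bA)^2\, n^{\frac{q-2}{2q}(r-2)+\frac{q-2}{q}}=\max_i\|\ba_i\|_2^{r-2}\,\sigma_{\max}(\bA)^2\, n^{\frac{r(q-2)}{2q}} .
\]
Taking $r$-th roots yields the first term of \eqref{laga}.
\item By Hölder's inequality, $|\ba_i^T\bx|\le\|\ba_i\|_p\|\bx\|_q\le \|\ba_i\|_p$. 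Substituting into Step 1 and taking $r$-th roots yields the second term, with exponent $n^{\frac{q-2}{qr}}$.
\end{itemize}
Taking the minimum of the two estimates and then the supremum over $\bx$ gives \eqref{laga}.

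\textbf{Step 3 (the corollary).} Now let $\bA$ be orthonormal, so that $\sigma_{\max}(\bA)=1$ and every row satisfies $\|\ba_i\|_2=1$. The first bound in \eqref{laga} then gives
\[
\|\bA\|_{q\to r}\le n^{\frac{q-2}{2q}} .
\]
For the matching lower bound, let $\ba_k=\mathbf{s}/\sqrt n$ be a row with $\mathbf{s}\in\{\pm1\}^n$. Take $\bx=n^{-1/q}\mathbf{s}$, which satisfies $\|\bx\|_q=1$. Then
\[
\|\bA\bx\|_r\ge|\ba_k^T\bx|=n^{-1/q}\,\frac{n}{\sqrt n}=n^{\frac12-\frac1q}=n^{\frac{q-2}{2q}} .
\]
This gives equality. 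A Hadamard matrix normalized by $1/\sqrt n$ is orthonormal and all of its rows have this $\pm 1/\sqrt n$ form, so it belongs to the class.

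\textbf{Main obstacle.} The difficulty is not computational but lies in the range of validity. The splitting in Step 1 needs $r\ge 2$, and the norm comparison $\|\bx\|_2\le n^{\frac12-\frac1q}\|\bx\|_q$ needs $q\ge 2$. For $q<2$ or $r<2$ the exponents change sign and both the inequality and the equality case can fail. So I would state the hypothesis $q,r\ge2$ explicitly and check that the bound is tight exactly at the sign-vector test point.
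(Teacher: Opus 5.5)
Your proof is correct and follows the paper's argument essentially step for step. The paper uses the same split $\sum_i|\ba_i^T\bx|^r\le(\max_i|\ba_i^T\bx|)^{r-2}\|\bA\bx\|_2^2$, the spectral bound combined with $\|\bx\|_2^2\le n^{\frac{q-2}{q}}$, Cauchy--Schwarz or H\"{o}lder on the maximum, and the sign vector $\bx=n^{-1/q}\mathbf{s}$ for the corollary. Your explicit hypothesis $q,r\ge 2$ is exactly what the paper's proof uses implicitly, and it is genuinely needed: for example, $q=1$, $r=2$ and a normalized Hadamard matrix give $\|\bA\|_{1\to 2}=1\neq n^{-1/2}$. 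Your lower bound $\|\bA\bx\|_r\ge|\ba_k^T\bx|$ is also a cleaner way to certify tightness than the paper's equality-case remark.
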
 
\begin{proof}
Observe that
\begin{align}
\|\bA\|_{q \to r}^r &= \sup_{\bx: \|\bx\|_q \leq 1} \sum_{i=1}^m |\ba_i^T \bx|^r\notag\\
& \leq  \sup_{\bx: \|\bx\|_q \leq 1} \max_{1\leq i \leq m} |\ba_i \bx|^{r-2} \bx^T \big(\sum_{i=1}^m \ba_i \ba_i^T\big) \bx\notag\\
&= \sup_{\bx: \|\bx\|_q \leq 1} \max_{1\leq i \leq m} |\ba_i \bx|^{r-2} \bx^T \bA^T \bA \bx \notag\\
&\leq  \sup_{\bx: \|\bx\|_q \leq 1} \max_{1\leq i \leq m} |\ba_i \bx|^{r-2} \sigma_{\max}(\bA)^2 \|\bx\|^2\notag \\
&\leq  \sup_{\bx: \|\bx\|_q \leq 1} \max_{1\leq i \leq m} \|\ba_i\|_2^{r-2} \|\bx\|_2^{r-2}\sigma_{\max}(\bA)^2 \|\bx\|_2^{2}\notag\\
&=  \sup_{\bx: \|\bx\|_q \leq 1} \max_{1\leq i \leq m}  \|\ba_i\|_2^{r-2}\sigma_{\max}(\bA)^2 \|\bx\|_2^r  \label{talet1}.
\end{align}
Now, for $\|\bx\|_q \leq 1$ by H\"{o}lder's inequality, we have
\begin{align*}
\|\bx\|_2^2 \leq n^{\frac{q-2}{q}} \|\bx\|_q^2 \leq n^{\frac{q-2}{q}}.
\end{align*}
Hence, we have
\begin{align}
\|\bA\|_{q \to r}^r  \leq  \max_{1\leq i \leq m} \|\ba_i\|_2^{r-2} \sigma_{\max}(\bA)^2 n^{\frac{(q-2)r}{2q}} \label{anet1}.
\end{align}
The inequality holds if $\ba_{i_0}= 1/\sqrt{n} \begin{bmatrix} \tau_1&\tau_2& \cdots & \tau_n\end{bmatrix}^T$ for some sequence $\{\tau_i\}_{i=1}^n \in \{-1,+1\}^n$ and $\bx=n^{-1/q} \begin{bmatrix} \tau_1&\tau_2& \cdots & \tau_n\end{bmatrix}^T$ for some $i_0$ and $\langle \ba_j, \ba_{i_0} \rangle =0$ for all $j \neq i_0$. 

On the other hand, we also have
\begin{align}
 \|\bA\|_{q \to r}^r &= \sup_{\bx: \|\bx\|_q \leq 1} \max_{1\leq i \leq m} |\ba_i \bx|^{r-2} \sigma_{\max}(\bA)^2 \|\bx\|_2^2\notag\\
 &=\sup_{\bx: \|\bx\|_q \leq 1} \max_{1\leq i \leq m} \|\ba_i\|_p^{r-2}  \|\bx\|_q^{r-2} \sigma_{\max}(\bA)^2 \|\bx\|_2^2\notag\\
 &\leq \max_{1\leq i \leq m} \|\ba_i\|_p^{r-2} \sigma_{\max}(\bA)^2 n^{\frac{q-2}{q}} \label{anet2}.
\end{align}
By combining \eqref{anet1} and \eqref{anet2}, we obtain \eqref{laga}. 
\end{proof}

\subsection{SVD decomposition}
In this section, we study the induced norm for a class of matrices via its singular value decomposition (SVD).
\begin{theorem} 
\label{thm: SVD decomposition}
Let $q \geq 2$ and $\tau_1, \tau_2, \cdots, \tau_n$ be an arbitrary tuple in $\{-1,+1\}^n$. Assume that $\bA \in \bbR^{m \times n}$ with a SVD decomposition:
\begin{align*} 
\bA= \bV \bSigma \bU^T,
\end{align*} which satisfies
\begin{align*}
\bu_1= \begin{bmatrix} \frac{\tau_1}{\sqrt{n}}, \frac{\tau_2}{\sqrt{n}}, \cdots, \frac{\tau_n}{\sqrt{n}} \end{bmatrix}^T \in \bbR^n,\quad
\bv_1=\begin{bmatrix} 1\\0 \\ \vdots \\ 0\end{bmatrix},\quad
\sigma_1 = \max_{1\leq i \leq \min\{m,n\}} \sigma_i,
\end{align*} where $\bu_1,\bv_1$ are respectively the first column of $\bU$ and  $\bV$, and $\sigma_1$ is the element with largest value in $\bSigma$ (note that $\min_{i \in \min\{m,n\}} \sigma_i \geq 0$).  Then, we have
\begin{align*}
\|\bA\|_{q \to r} =\sigma_1 n^{\frac{q-2}{2q}},
\end{align*} 
which is achieved at 
\begin{align}
\bx_*=n^{-1/q} (\tau_1,\tau_2, \cdots, \tau_n)^T \in \bbR^n. 
\end{align}
In particular, if $\bU$ is an orthonormal matrix with a column $\begin{bmatrix} \frac{\tau_1}{\sqrt{n}}, \frac{\tau_2}{\sqrt{n}}, \cdots, \frac{\tau_n}{\sqrt{n}} \end{bmatrix}^T$ for any tuple $(\tau_1, \tau_2, \cdots, \tau_n) \in \{-1,+1\}^n$ (such as the Hadamard matrix), then it holds that
\begin{align*}
\|\bU\|_{q \to r}= n^{\frac{q-2}{2q}}. 
\end{align*}
\end{theorem}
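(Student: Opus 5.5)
The plan is to prove matching lower and upper bounds. The lower bound comes from evaluating $\bA$ at the proposed maximizer $\bx_*$. The upper bound comes from a chain of elementary norm comparisons routed through the Euclidean norm, mirroring the argument in the proof of Theorem~\ref{thm: Hadamard}.

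\emph{Lower bound.} First I note that $\|\bx_*\|_q = (n\cdot n^{-1})^{1/q}=1$, so $\bx_*$ is feasible. Since $\bx_* = n^{-1/q}\sqrt{n}\,\bu_1 = n^{\frac12-\frac1q}\bu_1$ and the columns of $\bU$ are orthonormal, we get $\bU^T\bx_* = n^{\frac12-\frac1q}\be_1$. Hence
\begin{align*}
\bA\bx_* = \bV\bSigma\,n^{\frac12-\frac1q}\be_1 = \sigma_1 n^{\frac12-\frac1q}\bv_1 = \sigma_1 n^{\frac{q-2}{2q}}\,(1,0,\ldots,0)^T .
\end{align*}
This vector has a single nonzero entry, so its $r$-norm equals $\sigma_1 n^{\frac{q-2}{2q}}$ for every $r\ge 1$. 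This is the step where the hypothesis $\bv_1=\be_1$ is used: it makes the value independent of $r$. It gives $\|\bA\|_{q\to r}\ge \sigma_1 n^{\frac{q-2}{2q}}$.

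\emph{Upper bound.} For any $\bx$ with $\|\bx\|_q\le 1$, I would write
\begin{align*}
\|\bA\bx\|_2 \le \sigma_{\max}(\bA)\|\bx\|_2 = \sigma_1\|\bx\|_2 \le \sigma_1 n^{\frac{q-2}{2q}}\|\bx\|_q .
\end{align*}
The middle equality uses that $\sigma_1$ is the largest singular value. The last inequality is the H\"older step $\|\bx\|_2\le n^{\frac12-\frac1q}\|\bx\|_q$ from the proof of Theorem~\ref{thm: Hadamard}, which is valid because $q\ge 2$. When $r\ge 2$ we have $\|\bA\bx\|_r\le\|\bA\bx\|_2$, and the upper bound follows, matching the lower bound. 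The special case for an orthonormal $\bU$ with a $\pm 1/\sqrt{n}$ column is then obtained by applying the theorem with $\bSigma=\bI$ (all $\sigma_i=1$) and, after a row permutation if needed (Lemma~\ref{key:lem}), a $\bV$ whose first column is $\be_1$.

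\emph{Main obstacle.} The difficulty is the range $1\le r<2$. There $\|\cdot\|_r$ dominates $\|\cdot\|_2$, and the crude comparison $\|\bA\bx\|_r\le m^{\frac1r-\frac12}\|\bA\bx\|_2$ loses a dimensional factor. To handle it, I would first decompose $\bA\bx=\sum_i\sigma_i(\bu_i^T\bx)\bv_i$ and try to show that mass placed on the directions $i\ge2$ cannot increase the $r$-norm beyond what is gained on $\bv_1=\be_1$. I expect this may fail without additional assumptions: the other columns $\bv_i$ can be spread out, so their $\ell_r$-norms exceed their $\ell_2$-norms. In that case I would restrict the conclusion to $r\ge 2$, or add a hypothesis such as $\sigma_i=0$ for $i\ge2$, which reduces to the rank-one Lemma~\ref{lem: rank1} and gives $\|\bA\|_{q\to r}=\sigma_1\|\bu_1\|_{q^*}=\sigma_1 n^{\frac{q-2}{2q}}$ for all $r$.
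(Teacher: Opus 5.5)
\textbf{For $r\ge 2$ your proof is correct and is the paper's argument; the case $r<2$ is a false claim, not a missing step; and the ``in particular'' step has a real gap.}

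\emph{The case $r\ge 2$.} The paper uses the chain
\[
\sum_i|\ba_i^T\bx|^r\le\max_i|\ba_i^T\bx|^{r-2}\sum_i|\ba_i^T\bx|^2\le\|\bA\bx\|_r^{r-2}\,\bx^T\bA^T\bA\bx .
\]
This is your step $\|\bA\bx\|_r\le\|\bA\bx\|_2$ in another form. It is followed by $\sigma_1\|\bx\|_2\le\sigma_1 n^{\frac{q-2}{2q}}$ and evaluation at $\bx_*$, exactly as in your proposal. Both inequalities in that chain need $r-2\ge 0$, so the paper's proof also covers only $r\ge 2$.

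\emph{The case $1\le r<2$.} You should settle your ``main obstacle'' with a counterexample, because the statement is false there. Take $m=n=2$, $\bV=\bSigma=\bI$ and $\bU=\frac{1}{\sqrt2}\begin{bmatrix}1&1\\1&-1\end{bmatrix}$. Then $\bu_1=(1,1)^T/\sqrt2$, $\bv_1=\be_1$, and $\bA=\bU^T$ is the normalized $2\times 2$ Hadamard matrix. For $q=2$ and $r=1$,
\[
\|\bA\bx\|_1=\tfrac{1}{\sqrt2}\big(|x_1+x_2|+|x_1-x_2|\big)=\sqrt2\,\max(|x_1|,|x_2|).
\]
Hence $\|\bA\|_{2\to1}=\sqrt2$, attained at $\be_1$. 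The claimed value $\sigma_1n^{0}=1$ is only what $\bx_*$ achieves. More generally, every orthogonal $\bA$ with $n\ge 2$ satisfies $\|\bA\|_{2\to r}=\sup_{\|\by\|_2\le 1}\|\by\|_r=n^{\frac1r-\frac12}>1$ for $r<2$. So the theorem must assume $r\ge 2$, or rank one, where Lemma~\ref{lem: rank1} gives the formula for all $r$, as you observed.

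\emph{The ``in particular'' clause.} Call the orthonormal matrix there $\mathbf{W}$. Applying the theorem with $\bSigma=\bI$ means writing $\mathbf{W}=\bV\bU^T$ with $\bU\be_1=\bu_1=(\tau_1,\ldots,\tau_n)^T/\sqrt n$ and $\bV\be_1=\be_1$. This forces $\mathbf{W}\bu_1=\bV\bU^T\bu_1=\bV\be_1=\be_1$. So $\bu_1^T$ must be the first row of $\mathbf{W}$; a row permutation (Lemma~\ref{key:lem}) only lets it be any row. The argument therefore needs a $\pm1/\sqrt n$ \emph{row}. The natural choice $\bU=\mathbf{W}$, $\bV=\bI$ computes $\|\mathbf{W}^T\|_{q\to r}$, not $\|\mathbf{W}\|_{q\to r}$.

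With only a $\pm1/\sqrt n$ column the clause is false. Let $\mathbf{W}$ have columns $(1,1,1)^T/\sqrt3$, $(1,-1,0)^T/\sqrt2$ and $(1,1,-2)^T/\sqrt6$, and take $q,r>2$. At a maximizer, equality throughout
\[
\|\mathbf{W}\bx\|_r\le\|\mathbf{W}\bx\|_2=\|\bx\|_2\le 3^{\frac{q-2}{2q}}\|\bx\|_q
\]
would force $|x_1|=|x_2|=|x_3|$ and $\mathbf{W}\bx$ to be a multiple of some $\be_i$. That makes $\bx$ proportional to a row of $\mathbf{W}$. No row of this $\mathbf{W}$ has entries of equal modulus, so $\|\mathbf{W}\|_{q\to r}<3^{\frac{q-2}{2q}}$. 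Hadamard matrices escape this only because their rows, and not just their columns, have entries $\pm1/\sqrt n$.
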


\begin{proof}
For any $\bx \in \bbR^n$, we have
\begin{align*}
\|\bA \bx\|_r^r =\sum_{i=1}^m |\ba_i^T \bx|^r
\geq &\max_{1 \leq i \leq m} |\ba_i^T \bx|^r.
\end{align*}
It follows that
\begin{align}
\max_{1\leq i \leq m} |\ba_i^T \bx| \leq \|\bA \bx\|_r, \qquad \forall \bx \in \bbR^n \label{c1-1}. 
\end{align}
Then, we have
\begin{align}
\|\bA \bx\|_r^r&= \sum_{i=1}^m |\ba_i^T \bx|^r\notag\\
&\leq \max_{1 \leq i \leq m} |\ba_i^T \bx|^{r-2} \sum_{j=1}^m |\ba_i^T \bx|^2\notag\\
&\leq \|\bA \bx\|_r^{r-2}  \sum_{j=1}^m |\ba_i^T \bx|^2\notag\\
&=  \|\bA \bx\|_r^{r-2}  \bx^T \bA^T \bA \bx \label{c2-1}.
\end{align}
The equality in \eqref{c2-1} happens if
\begin{align}
\bA \bx = \gamma \begin{bmatrix} 1\\0 \\ \vdots \\ 0\end{bmatrix} \in \bbR^m
\end{align} for some $\gamma \in \bbR$. 

From \eqref{c2-1} we obtain
\begin{align}
\|\bA \bx\|_r^2 &\leq \bx^T \bA^T \bA \bx \qquad \forall \bx \in \bbR^n \label{c3}\\
&\leq \sigma_{\max}^2(\bA) \|\bx\|_2^2 \label{c4},
\end{align} where $\sigma_{\max}(\bA)$ is the largest singular value of $\bA$. 

Now, assume that
\begin{align*}
\bA = \bV \Sigma \bU^T
\end{align*} where $\Sigma \in \bbR^{m \times n}$ with singular values $(\sigma_1,\sigma_2, \cdots, \sigma_{\min\{m,n\}})$ on the diagonal and $\sigma_1 \geq \sigma_2 \geq \cdots \geq \sigma_{\min\{m,n\}}\geq 0$. Then, we have
\begin{align*}
\bA^T \bA = \bU \Sigma^2 \bU^T
\end{align*}
Let $\bu_1, \bu_2, \cdots, \bu_n$ are columns of $\bU$. Then, the equality in \eqref{c4} happens if
\begin{align}
\bx = \beta \bu_1 \label{mofat2}. 
\end{align}

Finally, for any $q \geq 2$ we have
\begin{align}
\|\bx\|_2^2&=\sum_{j=1}^n x_j^2 \leq n^{\frac{q-2}{q}}\|\bx\|_q^2 \label{mofat3a} \\
& \leq n^{\frac{q-2}{q}} \label{mofat3}.
\end{align}
The equality in \eqref{mofat3a} and \eqref{mofat3} simultaneously happens if
\begin{align*}
\bx = n^{-1/q} (\tau_1,\tau_2, \cdots, \tau_n)^T \in \bbR^n,
\end{align*} where $\tau_i \in \{-1,+1\}$ for all $i \in [n]$. 

From \eqref{c4} and \eqref{mofat3}, we obtain
\begin{align*}
\|\bA\|_{q\to r}^2 &=\max_{\bx: \|\bx\|_q \leq 1} \|\bA \bx\|_r^2 \\
&\leq  \sigma_{\max}^2(\bA) n^{\frac{q-2}{q}},
\end{align*}
or
\begin{align}
\|\bA\|_{q\to r} &\leq \sigma_{\max}(\bA)n^{\frac{q-2}{2q}} \label{amet}.
\end{align}
The equality in \eqref{amet} happens if the following equations hold:
\begin{align}
\bA \bx &= \gamma \begin{bmatrix} 1\\0 \\ \vdots \\ 0\end{bmatrix} \in \bbR^m, \quad \mbox{for some} \quad \gamma \in \bbR, \notag\\
\bx &= \beta \bu_1,\notag\\
\bx &= n^{-1/q} (\tau_1,\tau_2, \cdots, \tau_n)^T \in \bbR^n,\label{met}
\end{align}  where $\tau_i \in \{-1,+1\}$ for all $i \in [n]$. 

Note that if $\bx=\beta \bu_1$,  we have
\begin{align*}
\bA \bx&= \beta \bA \bu_1\\
&=\beta \bV \bSigma \bU^T \bu_1 \\
&=\beta \bV \bSigma  \begin{bmatrix} 1\\0 \\ \vdots \\ 0\end{bmatrix} \\
&= \beta \sigma_{\max}(\bA) \bv_1,
\end{align*} where $\bv_1, \bv_2, \cdots, \bv_m$ are columns of $\bV$. 
Hence, all the equations in \eqref{met} hold if
\begin{align*}
\bu_1&= \begin{bmatrix} \frac{\tau_1}{\sqrt{n}}, \frac{\tau_2}{\sqrt{n}}, \cdots, \frac{\tau_n}{\sqrt{n}} \end{bmatrix}^T \in \bbR^n,\\
\bv_1&=\begin{bmatrix} 1\\0 \\ \vdots \\ 0\end{bmatrix},
\end{align*} where $\tau_i \in \{-1,+1\}$ for all $i \in [n]$.

In summary, we have
\begin{align*}
\|\bA\|_{q\to r}= \sigma_{\max}(\bA)n^{\frac{q-2}{2q}},
\end{align*}
if the SVD decomposition of $\bA$, say
\begin{align*} 
\bA= \bV \bSigma \bU^T
\end{align*} satisfies
\begin{align*}
\bu_1&= \begin{bmatrix} \frac{\tau_1}{\sqrt{n}}, \frac{\tau_2}{\sqrt{n}}, \cdots, \frac{\tau_n}{\sqrt{n}} \end{bmatrix}^T \in \bbR^n,\\
\bv_1&=\begin{bmatrix} 1\\0 \\ \vdots \\ 0\end{bmatrix},\\
\sigma_1 &= \max_{1\leq i \leq \min\{m,n\}} \sigma_i.
\end{align*}
\end{proof}
\subsection{Shear matrices and extensions}
In this section, we compute the induced norm of a shear matrix and its extensions.
\begin{theorem}
\label{thm:rank1shear} For any $\gamma \in \bbR$,
let $\bA=I+\gamma e_1e_2^T\in \bbR^{n \times n}$ be the shear matrix:
\begin{align*}
\bA = \begin{bmatrix} 1& \gamma& 0 & 0 & \cdots& 0&0 \\ 0&1& 0 & 0 & \cdots& 0&0\\ 0&0& 1 & 0 & \cdots& 0&0\\  0&0& 0 & 1 & \cdots& 0&0\\   \vdots&\vdots& \vdots & \vdots & \ddots& \vdots &\vdots\\0&0& 0 & 0 & \cdots& 1&0\\0&0& 0 & 0 & \cdots& 0&1
\end{bmatrix}  \in \bbR^{n \times n}.
\end{align*}
Let $\lambda_0$ be the unique positive solution of the following equation:
\begin{align}
(1+ \lambda^p)^{\frac{q}{p}}-1 =(1+\lambda^p)^{\frac{q}{p}} \bigg|\frac{\gamma}{\lambda}\bigg|^q \label{alo1},
\end{align} 
where $p,q \geq 1$ be such that $
\frac{1}{q}+\frac{1}{p}=1$. Then we have
\begin{align*}
\|\bA\|_{q \to q} = (1+\lambda_0^p)^{\frac{1}{p}} \bigg|\frac{\gamma}{\lambda_0}\bigg|.
\end{align*}
\end{theorem}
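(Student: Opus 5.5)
The plan is to reduce to a two-dimensional problem and then obtain matching upper and lower bounds from a weighted Hölder inequality, with the weight tuned by \eqref{alo1}.

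\textbf{Step 1: reduction to the $2\times 2$ block.} For $\bx\in\bbR^n$ we have $\bA\bx=(x_1+\gamma x_2,x_2,x_3,\ldots,x_n)^T$, so
\[
\|\bA\bx\|_q^q=|x_1+\gamma x_2|^q+|x_2|^q+\sum_{i\ge 3}|x_i|^q .
\]
The coordinates $x_3,\ldots,x_n$ contribute with ratio exactly $1$. Hence $\|\bA\|_{q\to q}=\max\{1,\|\bB\|_{q\to q}\}$ with $\bB=\begin{bmatrix}1&\gamma\\0&1\end{bmatrix}$. Since $\|\bB\|_{q\to q}\ge \|\bB e_1\|_q=1$, it suffices to compute $\|\bB\|_{q\to q}$, i.e. to maximize $|x_1+\gamma x_2|^q+|x_2|^q$ subject to $|x_1|^q+|x_2|^q\le 1$. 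We may take $\gamma\neq 0$ and $x_1,x_2\ge 0$ (after flipping signs to align them with $\gamma$).

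\textbf{Step 2: weighted Hölder upper bound.} For a free parameter $\lambda>0$, write $x_1+\gamma x_2=1\cdot x_1+\lambda\cdot(\gamma/\lambda)x_2$ and apply Hölder with exponents $(p,q)$:
\[
|x_1+\gamma x_2|^q\le(1+\lambda^p)^{q/p}\Big(|x_1|^q+\big|\tfrac{\gamma}{\lambda}\big|^q|x_2|^q\Big).
\]
Adding $|x_2|^q$ gives
\[
\|\bB\bx\|_q^q\le c_1(\lambda)|x_1|^q+c_2(\lambda)|x_2|^q ,
\]
where $c_1(\lambda)=(1+\lambda^p)^{q/p}$ and $c_2(\lambda)=(1+\lambda^p)^{q/p}|\gamma/\lambda|^q+1$. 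This is at most $\max\{c_1,c_2\}\,\|\bx\|_q^q$. The bound is optimized when the two coefficients balance, $c_1(\lambda)=c_2(\lambda)$. This is exactly an equation of the type \eqref{alo1}: it rearranges to $(1+\lambda^p)^{q/p}-1=(1+\lambda^p)^{q/p}|\gamma/\lambda|^q$.

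At the balancing root $\lambda_0$, the identity \eqref{alo1} lets me rewrite the common value $c_1(\lambda_0)$ in the form $(1+\lambda_0^p)^{q/p}|\gamma/\lambda_0|^q$ claimed in the statement. I expect to have to adjust the normalization of the Hölder weights, for instance placing the weight on $x_1$ instead, $x_1+\gamma x_2=(\lambda/\gamma)\cdot(\gamma x_1/\lambda)+1\cdot\gamma x_2$, so that the balanced constant matches the stated expression literally. Existence and uniqueness of $\lambda_0>0$ should follow from monotonicity. The left side of \eqref{alo1} increases from $0$ to $\infty$ as $\lambda$ goes from $0$ to $\infty$. After dividing by $(1+\lambda^p)^{q/p}$, the comparison becomes $1-(1+\lambda^p)^{-q/p}$ (increasing) against $|\gamma/\lambda|^q$ (decreasing from $\infty$ to $0$), so there is exactly one crossing.

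\textbf{Step 3: sharpness.} Equality in Hölder requires $|x_1|^q\propto 1$ and $|\gamma x_2/\lambda_0|^q\propto\lambda_0^p$, i.e. $x_2/x_1=\lambda_0^{p-1}\lambda_0/|\gamma|\cdot\mathrm{sgn}(\gamma)$ up to the chosen normalization. Taking this direction, scaled to $\|\bx\|_q=1$ and with both entries having the sign making $x_1$ and $\gamma x_2$ agree, makes every inequality tight. Because $c_1=c_2$, nothing is lost in the final $\max$, so the upper bound is attained.

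\textbf{Main obstacle.} The delicate step is the bookkeeping in Step 2. One must choose the Hölder weights so that the balancing condition coincides exactly with \eqref{alo1} and the resulting constant equals $(1+\lambda_0^p)^{1/p}|\gamma/\lambda_0|$. A mismatch in normalization changes the constant by the additive $1$ coming from the $|x_2|^q$ term. If the direct choice does not match, I would fall back on the one-variable reduction $f(t)=\big((1-t^q)^{1/q}+|\gamma|t\big)^q+t^q$ on $[0,1]$. I would then check, via the Lagrange condition $f'(t)=0$ with $\lambda:=$ an appropriate power of $t/(1-t^q)^{1/q}$, that the critical equation is \eqref{alo1} and that the critical value is the stated norm.
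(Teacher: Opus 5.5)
\textbf{There is a genuine gap, and the target statement is in fact false.} The gap is the sentence in Step 2 claiming that \eqref{alo1} lets you rewrite the balanced constant $c_1(\lambda_0)$ as $(1+\lambda_0^p)^{q/p}|\gamma/\lambda_0|^q$. It does not: \eqref{alo1} says exactly
\[
(1+\lambda_0^p)^{q/p}|\gamma/\lambda_0|^q=c_1(\lambda_0)-1 .
\]
No renormalization of the Hölder weights can close this gap, and neither can the one-variable Lagrange fallback. The reason is that your Steps 1--3 are correct as written. The balancing condition $c_1=c_2$ is exactly \eqref{alo1}. The bound $\|\bA\bx\|_q^q\le c_1(\lambda_0)\|\bx\|_q^q$ holds, and your equality direction attains it. So your argument proves
\[
\|\bA\|_{q\to q}=(1+\lambda_0^p)^{1/p},\qquad\text{equivalently}\qquad \|\bA\|_{q\to q}^q=1+(1+\lambda_0^p)^{q/p}\Big|\frac{\gamma}{\lambda_0}\Big|^q .
\]
This is strictly larger than the stated value $\big((1+\lambda_0^p)^{q/p}-1\big)^{1/q}$. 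You should conclude this corrected formula instead, and your argument already proves it.

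For a concrete counterexample, take $p=q=2$ and $\gamma=1$. Then \eqref{alo1} becomes $\lambda^4=1+\lambda^2$, so $\lambda_0^2=\varphi:=(1+\sqrt5)/2$. The stated value is $(1+\varphi)^{1/2}\varphi^{-1/2}=\sqrt{\varphi}\approx1.272$. But already $\bx=(1,1,0,\ldots,0)^T/\sqrt2$ gives $\|\bA\bx\|_2=\sqrt{5/2}\approx1.581$. The true value is the top singular value of the $2\times2$ block, namely $\varphi=(1+\lambda_0^2)^{1/2}$, matching your corrected formula. Likewise, as $\gamma\to0$ one has $\lambda_0\to0$, so the stated value tends to $0$. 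That contradicts $\|\bA\|_{q\to q}\ge\|\bA e_1\|_q=1$, which you noted yourself in Step 1.

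The paper's proof uses the same weighted convexity/Hölder bound (with $\alpha=\lambda_0^p/(1+\lambda_0^p)$). It reaches the stated value through the same missing constant, lost at a different point. It regroups
\[
(1+\lambda_0^p)^{q/p}\big[|x_1|^q+|\gamma/\lambda_0|^q|x_2|^q\big]+1-|x_1|^q
\]
as
\[
\big[(1+\lambda_0^p)^{q/p}-1\big]|x_1|^q+(1+\lambda_0^p)^{q/p}|\gamma/\lambda_0|^q|x_2|^q ,
\]
dropping the constant $1$. Your $c_2$ correctly keeps it.

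Finally, \eqref{alo1} has no positive root when $\gamma=0$, and it is meaningless for $q=1$ (where $p=\infty$). Those cases must be excluded or treated separately; there the norm is $1$ and $1+|\gamma|$ respectively.
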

\begin{remark}
The equation \eqref{alo1} has a unique positive solution $\lambda$. Indeed, \eqref{alo1} is equivalent to
\begin{align*}
f(\lambda):=(1+\lambda^p)^{\frac{q}{p}}\bigg(1-\frac{|\gamma|^q}{\lambda^q}\bigg)=1.
\end{align*}
Direct computations yield
\[
f'(\lambda)
= q(1+\lambda^p)^{\frac{q}{p}-1}
\left(\lambda^{p-1} + |\gamma|^q \lambda^{-q-1}\right)>0\quad\text{for}\quad \lambda>0.
\]
Hence $f(\lambda)$ is increasing for $\lambda \in (0, \infty)$. Furthermore, $f(\lambda) \to -\infty $ as $\lambda \to 0^+$ and $f(\lambda) \to +\infty$ as $\lambda \to +\infty$. 
\end{remark}
\begin{proof}[Proof of Theorem 
\ref{thm:rank1shear}]
We have $\bA \bx=(x_1+\gamma x_2,x_2,\ldots, x_n)^T$. Hence we have
\begin{align}
\|\bA\|_{q\to q}^q&= \sup_{\bx: \|\bx\|_q \leq 1} \|\bA \bx\|_q^q\notag\\
&=\sup_{\bx: \|\bx\|_q \leq 1}  |x_1+\gamma x_2|^q+ \sum_{i=2}^n  |x_i|^q\notag\\
&\leq \sup_{\bx: \|\bx\|_q \leq 1}  |x_1+\gamma x_2|^q+1-|x_1|^q\notag\\
&\leq \sup_{x_1,x_2: |x_1|^q+|x_2|^q \leq 1}  |x_1+\gamma x_2|^q+1-|x_1|^q \label{t1}.
\end{align}
Let $p, q \geq 1$ be such that
\[
\frac{1}{q}+\frac{1}{p}=1.
\]
Applying the following elementary inequality (which is due to convexity)
\[
|a+b|^q\leq \frac{|a|^q}{(1-\alpha)^{q-1}}+\frac{|b|^q}{\alpha^{q-1}}\quad \text{for all}\quad q\geq 1, \alpha\in(0,1),
\]
to $a=x_1,~ b=\gamma x_2= \lambda_0 \frac{\gamma}{\lambda_0} x_2$ and $\alpha=\frac{\lambda_0^p}{1+\lambda_0^p}$, the first term in the right-hand side of \eqref{t1} can be estimated by
\begin{align}
|x_1+\gamma x_2|^q&=\bigg|x_1+\lambda_0 \frac{\gamma}{\lambda_0} x_2\bigg|^q\notag\\
&\leq \frac{|x_1|^q}{\Big(\frac{1}{1+\lambda_0^p}\Big)^{q-1}}+\frac{\lambda_0^q}{\Big(\frac{\lambda_0^p}{1+\lambda_0^p}\Big)^{q-1}}\bigg| \frac{\gamma}{\lambda_0}\bigg|^q |x_2|^q\notag
\\&= (1+  \lambda_0^p)^{\frac{q}{p}}\bigg[|x_1|^q+ \bigg| \frac{\gamma}{\lambda_0}\bigg|^q |x_2|^q\bigg] \label{t2},
\end{align}
where to obtain the last equality we have used the fact that $p(q-1)=q$. From \eqref{t1} and \eqref{t2} we obtain
\begin{align*}
\|\bA\|_{q\to q}^q &\leq \sup_{x_1,x_2: |x_1|^q+|x_2|^q \leq 1}  (1+  \lambda_0^p)^{\frac{q}{p}}\bigg[|x_1|^q+ \bigg| \frac{\gamma}{\lambda_0}\bigg|^q |x_2|^q\bigg] +1-|x_1|^q\\
&= \sup_{x_1,x_2: |x_1|^q+|x_2|^q \leq 1} \bigg[(1+  \lambda_0^p)^{\frac{q}{p}}-1\bigg]|x_1|^q+ (1+  \lambda_0^p)^{\frac{q}{p}}\bigg| \frac{\gamma}{\lambda_0}\bigg|^q |x_2|^q\\
&=(1+  \lambda_0^p)^{\frac{q}{p}}\bigg| \frac{\gamma}{\lambda_0}\bigg|^q (|x_1|^q+ |x_2|^q)\\
&\leq (1+  \lambda_0^p)^{\frac{q}{p}} \bigg|\frac{\gamma}{\lambda_0}\bigg|^q,
\end{align*}
where we have used the definition of $\lambda_0$ to obtain the second equality above. Hence
\begin{align}
\|\bA\|_{q\to q}\leq (1+  \lambda_0^p)^{\frac{1}{p}} \bigg|\frac{\gamma}{\lambda_0}\bigg| \label{amen}.
\end{align}
The equality in \eqref{amen} happens if
\begin{align*}
x_1&= \bigg(1+\lambda_0^p \bigg|\frac{\lambda_0}{\gamma}\bigg|^q\bigg)^{-\frac{1}{q}},\\
x_2&=\mbox{sgn}(\gamma)\bigg(1+\lambda_0^p \bigg|\frac{\lambda_0}{\gamma}\bigg|^q\bigg)^{-\frac{1}{q}} \bigg(\lambda_0^p \bigg|\frac{\lambda_0}{\gamma}\bigg|^q\bigg)^{1/q},\\
x_i&=0, \qquad \forall i \in [3,n]. 
\end{align*}
\end{proof}
Combining Theorem \ref{thm: SVD decomposition} and Theorem \ref{thm:rank1shear} we obtain the following result.
\begin{theorem}
\label{thm: SVDrank1shear}
Let $\bA \in \bbR^{n \times n}$ be an invertible matrix such that
\begin{align*}
\bA =\bB \bC,
\end{align*}
where $\bB$ and $\bC$ satisfy the following assumptions.

(i) $\bB$ has a SVD decomposition $\bB= \bV \bSigma \bU^T  \in \bbR^{n \times n}$,
which satisfies
\begin{align*}
\bu_1= \begin{bmatrix} \frac{1}{\sqrt{n}}, \frac{1}{\sqrt{n}}, \cdots, \frac{1}{\sqrt{n}} \end{bmatrix}^T \in \bbR^n,\quad
\bv_1=\begin{bmatrix} 1\\0 \\ \vdots \\ 0\end{bmatrix},\quad
\sigma_1 = \max_{1\leq i \leq \min\{m,n\}} \sigma_i,
\end{align*} where $\bu_1,\bv_1$ are first columns of $\bU, \bV$, and $\sigma_1$ is the element with largest value in $\bSigma$ (note that $\min_{i \in n} \sigma_i > 0$).

(ii) $\bC \in \bbR^{n \times n}=I+\gamma e_1^T e_2$ is the shear matrix
\begin{align*}
\bC:=\begin{bmatrix} 1& \gamma& 0 & 0 & \cdots& 0&0 \\ 0&1& 0 & 0 & \cdots& 0&0\\ 0&0& 1 & 0 & \cdots& 0&0\\  0&0& 0 & 1 & \cdots& 0&0\\   \vdots&\vdots& \vdots & \vdots & \ddots& \vdots &\vdots\\0&0& 0 & 0 & \cdots& 1&0\\0&0& 0 & 0 & \cdots& 0&1
\end{bmatrix},  
\end{align*} 
where $\gamma \in \bbR_+$ satisfying:
\begin{align*}
\gamma=\frac{\bigg[\bigg(1+\frac{n}{|1-\gamma|^q+n-1}\bigg)^{p/q}-1\bigg]^{1/p}}{\bigg(1+\frac{n}{|1-\gamma|^q+n-1}\bigg)^{1/q}}. 
\end{align*}
Then, we have
\begin{align*}
\|\bA\|_{q\to r}=n^{\frac{1}{q}}\bigg[|1-\gamma|^q + (n-1)\bigg]^{-\frac{1}{q}}  \sigma_1 n^{\frac{q-2}{2q}},
\end{align*} which achieves at
\begin{align*}
\bx^*= \frac{1}{\xi} n^{-\frac{1}{q}} \bA^{-1}\bB \begin{bmatrix} 1\\ 1 \\ \vdots \\ 1\end{bmatrix}=\frac{1}{\xi}  \bC \by^*. 
\end{align*}
\end{theorem}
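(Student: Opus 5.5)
Our plan is to combine the two preceding results through the factorization $\|\bA\|_{q\to r}\le \|\bB\|_{q\to r}\,\|\bC\|_{q\to q}$ and then exhibit a vector attaining the resulting bound.

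\textbf{Upper bound.} For any $\bx$ with $\|\bx\|_q\le 1$, write $\by=\bC\bx$. Then
\[
\|\bA\bx\|_r=\|\bB\by\|_r\le \|\bB\|_{q\to r}\,\|\by\|_q\le \|\bB\|_{q\to r}\,\|\bC\|_{q\to q}.
\]
Theorem \ref{thm: SVD decomposition} applies to $\bB$ with $\tau_i\equiv 1$ and gives $\|\bB\|_{q\to r}=\sigma_1 n^{\frac{q-2}{2q}}$. Theorem \ref{thm:rank1shear} gives $\|\bC\|_{q\to q}=(1+\lambda_0^p)^{1/p}|\gamma/\lambda_0|$, where $\lambda_0$ solves \eqref{alo1}.

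The first step is to check that the self-consistency condition imposed on $\gamma$ in the statement is exactly \eqref{alo1} rewritten. Set
\[
(1+\lambda_0^p)^{q/p}=1+\frac{n}{|1-\gamma|^q+n-1}.
\]
Then \eqref{alo1} gives $|\gamma/\lambda_0|^q=\big[(1+\lambda_0^p)^{q/p}-1\big]/(1+\lambda_0^p)^{q/p}$. Solving for $\gamma$, using $\lambda_0^p=(1+\lambda_0^p)-1$, should reproduce the displayed equation for $\gamma$. Consequently
\[
\|\bC\|_{q\to q}^q=(1+\lambda_0^p)^{q/p}-1=\frac{n}{|1-\gamma|^q+n-1},
\]
which is precisely the prefactor $n^{1/q}\big[|1-\gamma|^q+n-1\big]^{-1/q}$ in the claimed formula. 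This bookkeeping identifies the bound with the stated value.

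\textbf{Attainment (main obstacle).} We need one $\bx$ for which both inequalities above are equalities. Equality in the $\bB$-step requires $\bC\bx$ to be proportional to the maximizer $n^{-1/q}\mathbf{1}$ from Theorem \ref{thm: SVD decomposition}. So we take $\bx^*=\bC^{-1}\mathbf{1}$ normalized to $\|\bx^*\|_q=1$, which matches the stated $\bx^*$ up to the notation $\bA^{-1}\bB=\bC^{-1}$. Since $\bC^{-1}=I-\gamma e_1e_2^T$, we get $\bC^{-1}\mathbf{1}=(1-\gamma,1,\dots,1)^T$, with $q$-norm $\xi=(|1-\gamma|^q+n-1)^{1/q}$. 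Then $\bC\bx^*=\mathbf{1}/\xi$ and
\[
\|\bA\bx^*\|_r=\xi^{-1}\|\bB\mathbf{1}\|_r=\xi^{-1}n^{1/q}\,\|\bB\,n^{-1/q}\mathbf{1}\|_r=\xi^{-1}n^{1/q}\sigma_1 n^{\frac{q-2}{2q}},
\]
which is the claimed value.

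\textbf{Where the difficulty lies.} Attainment itself is therefore straightforward. The real difficulty is the upper bound: the submultiplicative estimate only yields $\|\bC\|_{q\to q}$, which is in general larger than $\|\bC\bx^*\|_q=n^{1/q}/\xi$. Closing the gap hinges on the defining equation for $\gamma$ forcing $\|\bC\|_{q\to q}=n^{1/q}/\xi$, that is, on $\bx^*$ also being the shear maximizer from the proof of Theorem \ref{thm:rank1shear}. That maximizer is supported on the first two coordinates, whereas $\bx^*$ has full support. So a plain product bound may not suffice. In that case we would instead bound directly, using \eqref{c3}--\eqref{mofat3}:
\[
\|\bA\bx\|_r\le \sigma_1\|\bC\bx\|_2\le \sigma_1 n^{\frac{q-2}{2q}}\|\bC\bx\|_q,
\]
and then maximize $\|\bC\bx\|_q$ over the $q$-sphere. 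We would compare that maximum with $n^{1/q}/\xi$ via the convexity inequality of Theorem \ref{thm:rank1shear}, with $\alpha$ tuned through the $\gamma$-equation. Verifying this last comparison is the step we expect to require the most care.
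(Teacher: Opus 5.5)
The missing piece is the upper bound. Your own diagnosis of it is correct, and in fact fatal. The fallback does not help: maximizing $\sigma_1 n^{\frac{q-2}{2q}}\|\bC\bx\|_q$ over $\|\bx\|_q\le 1$ returns exactly $\sigma_1 n^{\frac{q-2}{2q}}\|\bC\|_{q\to q}$, which is the same product bound, so no tuning of $\alpha$ can lower it.

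Put $s:=n/(|1-\gamma|^q+n-1)$. The claimed value is $s^{1/q}\sigma_1 n^{\frac{q-2}{2q}}$, and for your normalized $\bx^*\propto\bC^{-1}\mathbf{1}=(1-\gamma,1,\dots,1)^T$ one has $\|\bC\bx^*\|_q=s^{1/q}$. Coordinates $3,\dots,n$ pass through $\bC$ unchanged, while $\|\bC\|_{q\to q}\ge\|\bC\be_2\|_q=(1+\gamma^q)^{1/q}>1$. Deleting those coordinates therefore strictly increases any ratio $\|\bC\bx\|_q/\|\bx\|_q$ that exceeds $1$. So every maximizer vanishes there, and for $n\ge 3$, $\gamma>0$ one gets $\|\bC\|_{q\to q}>s^{1/q}$ strictly.

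The paper's proof follows exactly your route. It needs \eqref{c1} and \eqref{c2} simultaneously, and with $\by^*=n^{-1/q}\mathbf{1}$ these force $1=\|\by^*\|_q\le\|\bC\|_{q\to q}\|\bC^{-1}\by^*\|_q\le 1$. That is precisely the equality you doubted. It only appears to achieve it through two errors that your bookkeeping step inherits.
\begin{itemize}
\item The proof of Theorem~\ref{thm:rank1shear} drops the constant $1$ coming from $1-|x_1|^q$. The correct bound is $1+\bigl[(1+\lambda_0^p)^{q/p}-1\bigr](|x_1|^q+|x_2|^q)$, so $\|\bC\|_{q\to q}=(1+\lambda_0^p)^{1/p}$. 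The stated value $(1+\lambda_0^p)^{1/p}|\gamma/\lambda_0|=\bigl((1+\lambda_0^p)^{q/p}-1\bigr)^{1/q}$ tends to $0$ as $\gamma\to 0$, although $\|\bC\be_1\|_q=1$. For $q=2$ the corrected value is the square root of the top eigenvalue of $\bC^T\bC$.
\item Even granting that formula, putting $(1+\lambda_0^p)^{q/p}=1+s$ into \eqref{alo1} gives $\gamma=\lambda_0 s^{1/q}(1+s)^{-1/q}$. The displayed condition instead says $\gamma=\lambda_0(1+s)^{-1/q}$. So the step that ``should reproduce'' the condition does not.
\end{itemize}

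No argument can close the gap, because the statement is false. Take $q=r=2$, $n=2$ and $\bB=\tfrac{1}{\sqrt2}\bigl[\begin{smallmatrix}1&1\\1&-1\end{smallmatrix}\bigr]$. This $\bB$ is orthogonal with $\bB\mathbf{1}=\sqrt2\,\be_1$ and all singular values $1$, so (i) holds with $\bV=\bI$, $\bU=\bB$. With $p=q=2$ the displayed condition reads $\gamma=\sqrt{s/(1+s)}$ with $s=2/\bigl((1-\gamma)^2+1\bigr)$. Its positive root is $\gamma\approx 0.812$, $s\approx 1.93$, so the claimed norm is $\sqrt{s}\approx 1.39$. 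Yet $\bx=(1,1)^T/\sqrt2$ gives $\|\bA\bx\|_2=\|\bC\bx\|_2=\sqrt{((1+\gamma)^2+1)/2}\approx 1.46$. Indeed $\|\bA\|_{2\to2}=\|\bC\|_{2\to2}=\bigl(\gamma+\sqrt{\gamma^2+4}\bigr)/2\approx 1.49$. For $n\ge 3$, any orthogonal $\bB$ with $\bB\mathbf{1}=\sqrt n\,\be_1$ and $q=r=2$ fails as well, since then $\|\bA\|_{2\to2}=\|\bC\|_{2\to2}>s^{1/2}$ by the support argument above.

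What your argument does prove is the two-sided estimate $s^{1/q}\sigma_1 n^{\frac{q-2}{2q}}\le\|\bA\|_{q\to r}\le(1+\lambda_0^p)^{1/p}\sigma_1 n^{\frac{q-2}{2q}}$. The lower half is your attainment computation and is valid for all $r$. The upper half needs $q,r\ge 2$ so that Theorem~\ref{thm: SVD decomposition} applies. The example shows the lower half is not the norm in general.
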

\begin{proof}
%Observe that  $\bB$ is invertible and according to Theorem \ref{thm: SVD decomposition} we have
%\begin{align*}
%\|\bB\|_{q \to r} =\sigma_1 n^{\frac{q-2}{2q}},
%\end{align*} which happens at the optimizer
%\begin{align*}
%\bx_*=n^{-1/q} (1,1, \cdots, 1)^T \in \bbR^n. 
%\end{align*}
For any $\xi>0$, we have
\begin{align}
\|\bA\|_{q\to r}&=\sup_{\bx: \|\bx\|_q \leq 1}\|\bA \bx\|_r\notag \\
&=\sup_{\bx: \|\bx\|_q \leq 1} \big\| \bB (\bB^{-1} \bA \bx)\big\|_r\notag\\
&= \sup_{\|\bA^{-1} \bB \by\|_q \leq 1} \big\|\bB \by\big\|_r \notag\\
&=  \frac{1}{\xi} \sup_{\|\bA^{-1} \bB \by\|_q \leq \xi}\big\|\bB \by\big\|_r \label{k1}.
\end{align}
Now suppose that 
\begin{align*}
\|\bC\|_{q\to q} \leq \frac{1}{\xi}. 
\end{align*}
Let $\by\in \bbR^n$ satisfy $\|\bA^{-1} \bB \by\|_q \leq \xi$. Then we have
\begin{align}
\|\by\|_q&= \big\| \bB^{-1} \bA (\bA^{-1} \bB \by)\big\|_q\notag\\
& = \big\| \bC (\bA^{-1} \bB \by)\big\|_q\notag\\
&\leq \|\bC\|_{q\to q} \|\bA^{-1} \bB \by\|_q\notag\\
&\leq \frac{1}{\xi} \xi =1,\label{k2-1}
\end{align} 
where we have used $\bC=\bB^{-1}\bA$ to obtain the second equality above. Hence, from \eqref{k1} and \eqref{k2-1} we obtain
\begin{align*}
\|\bA\|_{q \to r} &\leq \frac{1}{\xi} \sup_{\by\in \bbR^n: \|\by\|_q \leq 1} \|\bB \by\|_r \\
&=\frac{1}{\xi}  \|\bB\|_{q \to r}\\
&= \frac{1}{\xi}  \sigma_1 n^{\frac{q-2}{2q}}.
\end{align*}
Note that $\|\bB\|_{q \to r}$ achieves the optimal value $\sigma_1 n^{\frac{q-2}{2q}}$ at $\by_*=n^{-1/q} (1,1, \cdots, 1)^T \in \bbR^n$ (by Theorem \ref{thm: SVD decomposition}). Hence, 
$\sup_{\|\bA^{-1} \bB \by\|_q \leq \xi} \big\|\bB \by\big\|_r =\|\bB\|_{q \to r}= \sigma_1 n^{\frac{q-2}{2q}} $ if
\begin{align*}
\|\bA^{-1} \bB \by^*\|_q \leq \xi,
\end{align*}
or equivalently
\begin{align*}
\|\bC^{-1} \by^*\|_q \leq \xi.
\end{align*}
In summary, we have proved that
\begin{align}
\|\bA\|_{q\to r} = \frac{1}{\xi} \sigma_1 n^{\frac{q-2}{2q}},
\end{align} 
for some $\xi>0$ under the condition that there exists $\bC \in \bbR^{n \times n}$, invertible, such that
\begin{align}
&\|\bC\|_{q\to q} \leq \frac{1}{\xi}, \label{c1} \\
&\|\bC^{-1} \by^*\|_q \leq \xi \label{c2},
\end{align} where $\by_*=n^{-1/q} (1,1, \cdots, 1)^T \in \bbR^n$. 

The existence of $\bC$ which satisfies both \eqref{c1} and \eqref{c2} is guaranteed, for example $\bC=\bI$ in which $\xi=1$ (see Lemma \ref{lem: diagonal}). Now, we construct a nontrivial class of matrices $\bC$ which satisfies these conditions. Indeed, let
\begin{align*}
\bC:=\begin{bmatrix} 1& \gamma& 0 & 0 & \cdots& 0&0 \\ 0&1& 0 & 0 & \cdots& 0&0\\ 0&0& 1 & 0 & \cdots& 0&0\\  0&0& 0 & 1 & \cdots& 0&0\\   \vdots&\vdots& \vdots & \vdots & \ddots& \vdots &\vdots\\0&0& 0 & 0 & \cdots& 1&0\\0&0& 0 & 0 & \cdots& 0&1
\end{bmatrix},  
\end{align*} for some $\gamma \in \bbR$. Then, by Theorem \ref{thm:rank1shear}, we have
\begin{align*}
\|\bC\|_{q\to q} = (1+\lambda_0^p)^{\frac{1}{p}} \bigg|\frac{\gamma}{\lambda_0}\bigg|,
\end{align*}
where $\lambda_0$ is the unique solution of the following equation:
\begin{align}
(1+ \lambda^p)^{\frac{q}{p}}-1 =(1+\lambda^p)^{\frac{q}{p}} \bigg|\frac{\gamma}{\lambda}\bigg|^q \label{aloo}.
\end{align} 
For this matrix $\bC$, we have
\begin{align*}
\bC^{-1}=\begin{bmatrix} 1& -\gamma& 0 & 0 & \cdots& 0&0 \\ 0&1& 0 & 0 & \cdots& 0&0\\ 0&0& 1 & 0 & \cdots& 0&0\\  0&0& 0 & 1 & \cdots& 0&0\\   \vdots&\vdots& \vdots & \vdots & \ddots& \vdots &\vdots\\0&0& 0 & 0 & \cdots& 1&0\\0&0& 0 & 0 & \cdots& 0&1
\end{bmatrix}=(I-\gamma e_1^T e_2),
\end{align*}
so, by taking $\by^*=n^{-1/q} (1,\ldots, 1)^T$, we have
\begin{align}
\big\|\bC^{-1} \by^*\big\|_q &= n^{-\frac{1}{q}} \bigg\|\begin{bmatrix} 1& -\gamma& 0 & 0 & \cdots& 0&0 \\ 0&1& 0 & 0 & \cdots& 0&0\\ 0&0& 1 & 0 & \cdots& 0&0\\  0&0& 0 & 1 & \cdots& 0&0\\   \vdots&\vdots& \vdots & \vdots & \ddots& \vdots &\vdots\\0&0& 0 & 0 & \cdots& 1&0\\0&0& 0 & 0 & \cdots& 0&1
\end{bmatrix}\begin{bmatrix} 1\\1\\ 1\\1\\ \vdots \\1\\1\end{bmatrix}\bigg\|_q\notag\\
&= n^{-\frac{1}{q}}\bigg[|1-\gamma|^q + (n-1)\bigg]^{\frac{1}{q}} \label{atom2}.
\end{align}
Now, define
\begin{align}
\xi:=n^{-\frac{1}{q}}\bigg[|1-\gamma|^q + (n-1)\bigg]^{\frac{1}{q}} \label{alo0}.
\end{align}
Then, we need to show that there exists $\gamma \in \bbR$ such that
\begin{align}
(1+\lambda_0^p)^{\frac{1}{p}} \bigg|\frac{\gamma}{\lambda_0}\bigg|=\frac{1}{\xi} \label{alo2}
\end{align}
and
\begin{align}
(1+ \lambda_0^p)^{\frac{q}{p}}-1 =(1+\lambda_0^p)^{\frac{q}{p}} \bigg|\frac{\gamma}{\lambda_0}\bigg|^q \label{alo}.
\end{align} 
Indeed, from \eqref{alo2} and \eqref{alo} we have
\begin{align*}
(1+ \lambda_0^p)^{\frac{q}{p}}-1 =\frac{1}{\xi^q},
\end{align*} or equivalently
\begin{align*}
\lambda_0=\bigg[\bigg( \frac{\xi^q+1}{\xi^q}\bigg)^{\frac{p}{q}}-1\bigg]^{\frac{1}{p}}.
\end{align*}
To satisfy \eqref{alo2}, we choose 
\begin{align*}
|\gamma|=\frac{\lambda_0}{(1+\lambda_0^p)^{\frac{1}{p}}} = \frac{\bigg[\bigg( \frac{\xi^q+1}{\xi^q}\bigg)^{\frac{p}{q}}-1\bigg]^{\frac{1}{p}}}{\bigg( \frac{\xi^q+1}{\xi^q}\bigg)^{\frac{1}{q}}},
\end{align*} or equivalently
\begin{align}
g(\gamma)=|\gamma|-\frac{\bigg[\bigg(1+\frac{n}{|1-\gamma|^q+n-1}\bigg)^{p/q}-1\bigg]^{1/p}}{\bigg(1+\frac{n}{|1-\gamma|^q+n-1}\bigg)^{1/q}}=0 \label{balo}.
\end{align}
Now, observe that
\begin{align*}
g(0)&=-\frac{\bigg[\bigg(1+\frac{n}{n}\bigg)^{p/q}-1\bigg]^{1/p}}{\bigg(1+\frac{n}{n}\bigg)^{1/q}}<0,\\
\lim_{\gamma \to \infty} g(\gamma)&=+\infty.  
\end{align*}
Hence, there exists $\gamma_0>0$ which is a solution of \eqref{alo2}. 

In summary, we have proved that
\begin{align*}
\|\bA\|_{q\to r} &= \frac{1}{\xi} \sigma_1 n^{\frac{q-2}{2q}}\\
&=n^{\frac{1}{q}}\bigg[|1-\gamma|^q + (n-1)\bigg]^{-\frac{1}{q}}  \sigma_1 n^{\frac{q-2}{2q}},
\end{align*} which achieves at
\begin{align*}
\bx^*= \frac{1}{\xi} n^{-\frac{1}{q}} \bA^{-1}\bB \begin{bmatrix} 1\\ 1 \\ \vdots \\ 1\end{bmatrix}=\frac{1}{\xi}  \bC^{-1} \by^*. 
\end{align*}
Note that
\begin{align*}
\|\bx^*\|_q=\frac{1}{\xi} \big\|\bC^{-1} \by^*\big\|_q=1. 
\end{align*}
\end{proof}
\subsection{Toeplitz matrices}
In this section we calcualte the induced norm of a class of matrices that cover bi-diagonal Toeplitz matrices with identity entries.
\begin{theorem} 
\label{thm: Toeplitz}
Let $\bA\in \bbR^{n \times n}$ be a class of matrices such that
\[
(\bA x)_i=x_{i_1}+\ldots+ x_{i_k},\quad i=1,\ldots, n
\]
for some $1\leq k\leq n$ such that the collection $\{x_{i1},\ldots,x_{i_k}\}_{i=1}^n$consists of $k$ copies of the set $\{x_1,\ldots, x_n\}$. Then
\[
\|\bA\|_{q \to q}= k.
\]
\end{theorem}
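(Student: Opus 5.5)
The plan is a direct two-sided estimate: a convexity (power-mean) bound applied row by row gives $\|\bA\bx\|_q\le k\|\bx\|_q$, and a constant test vector shows the bound is attained.

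First we rewrite the hypothesis in matrix language. Row $i$ of $\bA$ picks out the indices $i_1,\ldots,i_k$, possibly with repetition. The condition that the collection $\{x_{i_1},\ldots,x_{i_k}\}_{i=1}^n$ is $k$ copies of $\{x_1,\ldots,x_n\}$ means the following: as $i$ runs over $[n]$ and $l$ over $[k]$, each index $j\in[n]$ appears as some $i_l$ exactly $k$ times. So every row of $\bA$ has entries summing to $k$, every column does as well, and all entries are nonnegative integers.

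\textbf{Upper bound.} Fix $\bx$ with $\|\bx\|_q\le 1$. For each row, apply convexity of $t\mapsto|t|^q$ (the power-mean or Jensen inequality, equivalently H\"older with conjugate exponents $p,q$ against the all-ones vector of length $k$):
\begin{align*}
|x_{i_1}+\cdots+x_{i_k}|^q\le k^{q-1}\sum_{l=1}^k |x_{i_l}|^q .
\end{align*}
Summing over $i=1,\ldots,n$ and using the counting property (each $|x_j|^q$ appears exactly $k$ times on the right) gives
\begin{align*}
\|\bA\bx\|_q^q\le k^{q-1}\cdot k\sum_{j=1}^n|x_j|^q=k^q\|\bx\|_q^q\le k^q .
\end{align*}
Hence $\|\bA\|_{q\to q}\le k$. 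This argument does not need the indices within a row to be distinct; it only uses the column-count property.

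\textbf{Lower bound.} Take $\bx_*=n^{-1/q}(1,\ldots,1)^T$, so $\|\bx_*\|_q=1$. Every row sum equals $k$, so $\bA\bx_*=k\,\bx_*$ and $\|\bA\bx_*\|_q=k$. This is the same extremal vector that appeared in Theorem~\ref{thm: SVD decomposition}. Combining the two bounds gives $\|\bA\|_{q\to q}=k$.

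There is no real obstacle here. The only point needing care is the bookkeeping step: turning the informal ``$k$ copies'' hypothesis into the statement that each $|x_j|^q$ is counted exactly $k$ times when the row inequalities are summed. This is exactly what makes the Jensen bound tight, with equality when all $|x_j|$ are equal and of the same sign.
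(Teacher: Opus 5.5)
Your proposal is correct and follows essentially the same route as the paper. The paper also bounds each row by $|x_{i_1}+\cdots+x_{i_k}|^q\le k^{q-1}\sum_{l=1}^k|x_{i_l}|^q$, sums using the $k$-fold counting property to get $k^q\|x\|_q^q$, and attains the bound at the constant vector $n^{-1/q}(1,\ldots,1)^T$.
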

\begin{proof} 
We have
\begin{align*}
\|\bA x\|^q_{q}&=\sum_{i=1}^n |(Ax)_i|^q  \notag  \\
&=\sum_{i=1}^n |x_{i_1}+\ldots+x_{i_k}|^q\notag
\\&\leq k^{q-1}\sum_{i=1}^n(|x_{i_1}|^q+\ldots+|x_{i_k}|^q)
\\&=k^{q}\sum_{i=1}^n |x_i|^q=k^q\|x\|_q^q,
\end{align*}
where the first inequality is an application of Hölder's inequality and the third equality follows from the assumption that the collection $\{x_{i1},\ldots,x_{i_k}\}_{i=1}^n$consists of $k$ copies of the set $\{x_1,\ldots, x_n\}$.

Now, for $\bx^*=(x_1^*,x_2^*,\cdots, x_n^*)$ where $x_1^*=x_2^*=\cdots = x_n^*=n^{-\frac{1}{q}}$, we have 
$\|\bx^*\|_q=1$ and
$(\bA \bx^*)_i=k n^{-\frac{1}{q}}$ for all $i=1,\ldots, n$, from which we have
\begin{align*}
\|\bA \bx^*\|_q^q=\sum_{i=1}^n |(Ax)_i|^q=n (k n^{-\frac{1}{q}})^q=k^q.
\end{align*} 
Hence, we obtain
\begin{align*}
\|\bA\|_{q \to q}= k.
\end{align*}
\end{proof}
%\textcolor{blue}{can we apply the trick of Theorem \ref{thm: SVDrank1shear} for this matrix? I don't know for sure. It seems to me that the technique in \ref{thm: SVDrank1shear} is hard to apply for Toepliz matrices.}
\begin{remark}    
As a corollary of Theorem \ref{thm: Toeplitz}, let $\bA\in \bbR^{n \times n}$ be a Toeplitz matrix with the following structure:
\begin{align*}
\bA=\begin{bmatrix} 1&1& 0&0& \cdots  &0 &0 &0 \\ 0&1&1&0& \cdots& 0&0&0 \\  \vdots& \vdots&\vdots& \vdots& \ddots& \vdots& \vdots& \vdots \\ 0&0& 0&0& \cdots  &1 &1 &0 \\  0&0& 0&0& \cdots  &0 &1 &1\\  1&0& 0&0& \cdots  &0 &0 &1     \end{bmatrix}. 
\end{align*}
Then, it holds that
\begin{align*}
\|\bA\|_{q \to q}= 2. 
\end{align*}
Similarly, let
\begin{align*}
\bA=\begin{bmatrix} 1&-1& 0&0& \cdots  &0 &0 &0 \\ 0&1&-1&0& \cdots& 0&0&0 \\  \vdots& \vdots&\vdots& \vdots& \ddots& \vdots& \vdots& \vdots \\ 0&0& 0&0& \cdots  &1 &-1 &0 \\  0&0& 0&0& \cdots  &0 &1 &-1\\  -1&0& 0&0& \cdots  &0 &0 &1     \end{bmatrix} \end{align*}
Suppose that $n$ is odd. Then it holds that $\|\bA\|_{q\to q}=2$, which is achieved at the optimizer $x_i^*=(-1)^i n^{-1/q}$.  
\end{remark}
\subsection{Orthogonal transformations}
In this section, we study the induced norm $\|\cdot\|_{q\to r}$ in relation to orthogonal transformations.
\begin{theorem}
\label{thm: Orthogonal}
Let $r\geq 2$ and $q\geq 2$. Then, for any orthonormal matrix $\bU \in \bbR^{n \times n}$ with the row vectors $\bu_1^T, \bu_2^T, \cdots, \bu_n^T$, there exists a diagonal matrix $\bLambda$ such that
\begin{align*}
\|\bU \bLambda\|_{q \to r}=1.
\end{align*}
More specifically, 
\begin{align*}
\bLambda=\mathrm{diag}(|u_{i,1}|^{\frac{q-2}{q}}, |u_{i,2}|^{\frac{q-2}{q}}, \cdots, |u_{i,n}|^{\frac{q-2}{q}})
\end{align*} for any $i \in [n]$. 

The optimizer is
\begin{align*}
x_j= |u_{i,j}|^{\frac{2}{q}}\mathrm{sgn}(u_{i,j}), \qquad \forall j \in [n]. 
\end{align*}
\end{theorem}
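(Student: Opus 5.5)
The plan is a two-sided bound: an upper bound through the Euclidean norm, using orthogonality of $\bU$ and one Hölder inequality, and a matching lower bound by evaluating $\bU\bLambda$ at the stated optimizer. Throughout, fix the row index $i\in[n]$ that defines $\bLambda$, and write $\lambda_j=|u_{i,j}|^{\frac{q-2}{q}}$ for the diagonal entries. If $q=2$ I use the convention $0^0=1$, so that $\bLambda=\bI$.

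\textbf{Upper bound.} Take $\bx$ with $\|\bx\|_q\le 1$.
\begin{itemize}
\item Since $r\ge 2$, the $\ell_r$ norm is dominated by the $\ell_2$ norm, so $\|\bU\bLambda\bx\|_r\le \|\bU\bLambda\bx\|_2$.
\item Since $\bU$ is orthonormal, $\|\bU\bLambda\bx\|_2=\|\bLambda\bx\|_2$.
\item Then
\begin{align*}
\|\bLambda\bx\|_2^2=\sum_{j=1}^n |u_{i,j}|^{\frac{2(q-2)}{q}}|x_j|^2 .
\end{align*}
\item For $q>2$, apply Hölder's inequality with the conjugate exponents $\frac{q}{q-2}$ and $\frac{q}{2}$:
\begin{align*}
\|\bLambda\bx\|_2^2\le \Big(\sum_{j=1}^n |u_{i,j}|^2\Big)^{\frac{q-2}{q}}\Big(\sum_{j=1}^n |x_j|^q\Big)^{\frac{2}{q}}\le 1 .
\end{align*}
The last step uses $\|\bu_i\|_2=1$, because the rows of an orthonormal matrix are unit vectors.
\item For $q=2$ the bound is immediate.
\end{itemize}
This gives $\|\bU\bLambda\|_{q\to r}\le 1$. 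It is essentially the argument of Theorem \ref{thm: SVD decomposition}, with the factor $n^{\frac{q-2}{q}}$ replaced by the weighted Hölder bound.

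\textbf{Lower bound.} Take $x_j=|u_{i,j}|^{2/q}\,\mathrm{sgn}(u_{i,j})$.
\begin{itemize}
\item Its norm is $\|\bx\|_q^q=\sum_j |u_{i,j}|^2=1$, so $\bx$ is feasible.
\item Coordinatewise, $(\bLambda\bx)_j=|u_{i,j}|^{\frac{q-2}{q}+\frac{2}{q}}\mathrm{sgn}(u_{i,j})=u_{i,j}$, so $\bLambda\bx=\bu_i$.
\item The $k$-th entry of $\bU\bu_i$ is $\langle \bu_k,\bu_i\rangle=\delta_{ki}$, so $\bU\bLambda\bx=\be_i$.
\item Hence $\|\bU\bLambda\bx\|_r=1$.
\end{itemize}
Combining the two bounds yields $\|\bU\bLambda\|_{q\to r}=1$, attained at the stated optimizer.

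\textbf{Expected obstacle.} There is no serious obstacle; the main point is choosing the Hölder exponents so that the weights $\lambda_j$ pair exactly with $\|\bu_i\|_2=1$. One should also check that equality holds throughout the chain at the optimizer. For $\ell_r\le\ell_2$, equality holds because $\be_i$ has a single nonzero entry. For Hölder, equality holds because $|x_j|^q$ is proportional to $|u_{i,j}|^2$. Finally, zero entries $u_{i,j}=0$ need care, since they make $\lambda_j=0$ when $q>2$. Such coordinates drop out of both sums, so the bounds are unaffected.
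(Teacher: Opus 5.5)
Your proposal is correct and follows essentially the same route as the paper: you bound $\|\bU\bLambda\bx\|_r$ by $\|\bU\bLambda\bx\|_2=\|\bLambda\bx\|_2$ using $r\ge 2$ and orthogonality, apply H\"older with exponents $\frac{q}{q-2}$ and $\frac{q}{2}$ against $\|\bu_i\|_2=1$, and attain the bound at the stated $\bx$, where $\bU\bLambda\bx=\be_i$. The paper instead derives $\bLambda$ backwards from the equality conditions, and its H\"older step has a typo in the outer exponent; your direct verification of the lower bound, with explicit handling of $q=2$ and of zero entries $u_{i,j}=0$, is slightly cleaner.
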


\begin{proof} Assume that $\bLambda=\mbox{diag}(\lambda_1, \lambda_2, \cdots, \lambda_n)$. Then, for any $\bx \in \bbR^n$ we have
\begin{align}
\|\bU \bLambda \bx\|_r^r&=\sum_{i=1}^n |\bu_i^T \bLambda \bx|^r \notag\\
&= \sum_{i=1}^n (|\bu_i^T \bLambda \bx|^2)^{r/2}\notag\\
&\leq \bigg(\sum_{i=1}^n |\bu_i^T \bLambda \bx|^2\bigg)^{r/2} \label{y1}.
\end{align}
The equality in \eqref{y1} happens if
\begin{align*}
\bU \bLambda \bx =\gamma \be_i
\end{align*}  for some $\gamma \in \bbR$, where $\be_i$ is the vector with all elements being zero except the $i$-th element being equal to 1. 

From \eqref{y1} we obtain
\begin{align}
\|\bU \bLambda \bx\|_r^2 &\leq \sum_{i=1}^n |\bu_i^T \bLambda \bx|^2 \notag\\
&= \bx^T \bLambda \sum_{i=1}^n \bu_i \bu_i^T \bLambda \bx\notag\\
&= \bx^T \bLambda \bU^T \bU \bLambda \bx\notag\\
&= \|\by\|_2^2 \label{y2},
\end{align}
where
\begin{align*}
\by^T=(\lambda_1 x_1, \lambda_2 x_2, \cdots, \lambda_n x_n). 
\end{align*}
Now, we have
\begin{align}
\|\by\|_2^2&= \sum_{i=1}^n x_i^2 \lambda_i^2\notag\\
&\leq \bigg(\sum_{i=1}^n |\lambda_i|^{\frac{2q}{q-2}}\bigg)^{\frac{q}{q-2}}\bigg(\sum_{i=1}^n |x_i|^q\bigg)^{\frac{2}{q}} \label{mage} \\
&\leq 1 \label{mage1}
\end{align} if we choose $\lambda_i$ such that
\begin{align}
\sum_{i=1}^n |\lambda_i|^{\frac{2q}{q-2}}=1 \label{m1}. 
\end{align}
The equalities in \eqref{mage} and \eqref{mage1} simultaneously happen if
\begin{align}
\sum_{i=1}^n |\lambda_i|^{\frac{2q}{q-2}}&=1 \label{z1}\\
|x_i|=|x_i^*|&=|\lambda_i|^{\frac{2}{q-2}}, \qquad \forall i \in [n].\label{z2}
\end{align}
From \eqref{y1}, \eqref{y2}, and \eqref{mage1} we finally obtain
\begin{align}
\|\bU \bLambda\|_{q \to r} &=\sup_{\bx: \|\bx\|_q \leq 1} \|\bU \bLambda \bx\|_r\notag\\
&\leq 1 \label{k2}.
\end{align}
The equality in \eqref{k2} happens if
\begin{align}
\bU \bLambda \bx_* &=\be_i, \label{k10} \\
\sum_{i=1}^n |\lambda_i|^{\frac{2q}{q-2}}&=1, \label{k12},\\
x_j&=|\lambda_j|^{\frac{2}{q-2}}\sgn(\lambda_j), \qquad \forall j \in [n] \label{k12b}.  
\end{align}
Note that \eqref{k10} is equivalent to
\begin{align*}
\bLambda \bx_* = \bU^T \be_i = \bu_i,
\end{align*}
or equivalently
\begin{align}
|\lambda_j|^{\frac{q}{q-2}} \mbox{sgn}(\lambda_j) \mbox{sgn}(x_j)=u_{i,j}, \qquad \forall j \in [n] \label{k11}.
\end{align}
Under the condition that \eqref{k11} holds then \eqref{k12} also holds since $\bU$ is an orthonormal matrix, so $\|\bu_i\|_2^2=1$. Finally, \eqref{k11} happens if
\begin{align*}
\lambda_j = |u_{i,j}|^{\frac{q-2}{q}},
\end{align*}
and
\begin{align*}
x_j= |\lambda_j|^{\frac{2}{q-2}} \mbox{sgn}(\lambda_j)= |u_{i,j}|^{\frac{2}{q}}\mbox{sgn}(u_{i,j}), \qquad \forall j \in [n]. 
\end{align*}
\end{proof}
\begin{theorem} 
\label{thm: orthogonalSVD}
Let $\bA \in \bbR^{m \times n}$ with the following decomposition:
\begin{align*}
\bA= \bU \bSigma \bV \bSigma_V,
\end{align*} where $\bU \in \bbR^{m \times n}$ is an arbitrary orthogonal matrix with the first column $(1,0,0,\cdots,0)^T \in \bbR^n$, $\bSigma \in \bbR^{m \times n}$ is a diagonal matrix, and $\bV$ is an arbitrary orthogonal matrix, $\bSigma_V= \mbox{diag}(|v_{11}|^{\frac{q-2}{q}},|v_{12}|^{\frac{q-2}{q}}, \cdots, |v_{1n}|^{\frac{q-2}{q}})$ where
\begin{align*}
\bV=\begin{bmatrix} \bv_1^T \\ \bv_2^T \\ \vdots \\ \bv_n^T \end{bmatrix}. 
\end{align*}
 Then, it holds that
\begin{align*}
\|\bA\|_{q \to r}= \lambda_{\max}(\bSigma),
\end{align*} where $\lambda_{\max}(\bSigma)$ is the largest absolute-value diagonal element of $\bSigma$.
\end{theorem}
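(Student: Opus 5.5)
I will prove the statement as a combination of the upper-bound chain used for Theorem \ref{thm: Orthogonal} with the structural observation behind Theorem \ref{thm: SVD decomposition}. Throughout I assume, as in Theorem \ref{thm: Orthogonal}, that $q,r\ge 2$. The statement also implicitly needs that the largest absolute diagonal entry of $\bSigma$ sits in the first position, i.e.\ $|\sigma_1|=\lambda_{\max}(\bSigma)$, to match the first column of $\bU$. I will make this explicit; it is where the hypothesis on $\bU$ enters.

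\textbf{Upper bound.} For $\|\bx\|_q\le 1$, I use $\|\bz\|_r\le\|\bz\|_2$ for $r\ge 2$ (the inequality \eqref{y1}). Then $\bU$ has orthonormal columns, and $\bSigma$ has operator $2$-norm $\lambda_{\max}(\bSigma)$. Together these give
\begin{align*}
\|\bA\bx\|_r\le \|\bU\bSigma\bV\bSigma_V\bx\|_2\le \lambda_{\max}(\bSigma)\,\|\bV\bSigma_V\bx\|_2=\lambda_{\max}(\bSigma)\,\|\bSigma_V\bx\|_2 .
\end{align*}
Next I bound $\|\bSigma_V\bx\|_2$ by Hölder exactly as in \eqref{mage}. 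Put $\lambda_j=|v_{1j}|^{(q-2)/q}$; then $\sum_j|\lambda_j|^{2q/(q-2)}=\sum_j v_{1j}^2=1$, since the first row of the orthogonal $\bV$ has unit norm. Hence $\|\bSigma_V\bx\|_2\le 1$, and so $\|\bA\|_{q\to r}\le\lambda_{\max}(\bSigma)$. This step is just Theorem \ref{thm: Orthogonal}'s computation with $\bV$ in place of $\bU$, giving $\|\bV\bSigma_V\|_{q\to 2}\le 1$.

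\textbf{Lower bound.} I take the optimizer from Theorem \ref{thm: Orthogonal}, namely $x_j^*=|v_{1j}|^{2/q}\sgn(v_{1j})$. It satisfies $\|\bx^*\|_q^q=\sum_j v_{1j}^2=1$ and $\bSigma_V\bx^*=\bv_1$. Because $\bV$ is orthogonal, $\bV\bv_1=\be_1$. Then $\bSigma\be_1=\sigma_1\be_1$, and $\bU\be_1$ equals the first column of $\bU$, which is $(1,0,\dots,0)^T$. Therefore $\bA\bx^*=\sigma_1(1,0,\dots,0)^T$, and $\|\bA\bx^*\|_r=|\sigma_1|$.

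The main obstacle is the bookkeeping in this chain. One must use $\bV\bv_1=\be_1$, which requires the first \emph{row} $\bv_1^T$ to be paired with $\bV$ acting on the left; this follows from $\bV\bV^T=\bI$. One must also ensure that $|\sigma_1|$ is the maximal diagonal entry. If the statement really intends an arbitrary ordering of $\bSigma$, I would first argue that the ordering can be normalized, which needs care because permuting $\bSigma$ interacts with the fixed first column of $\bU$. Failing that, I would state the extra hypothesis $|\sigma_1|=\lambda_{\max}(\bSigma)$ explicitly, analogous to the condition $\sigma_1=\max_i\sigma_i$ in Theorem \ref{thm: SVD decomposition}. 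Under it the two bounds match and give $\|\bA\|_{q\to r}=\lambda_{\max}(\bSigma)$.
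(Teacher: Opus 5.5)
Your proof is correct and follows essentially the same route as the paper: the Hölder bound $\|\bSigma_V\bx\|_2\le\|\bx\|_q$, orthogonal invariance together with $\|\cdot\|_r\le\|\cdot\|_2$ for $r\ge 2$ for the upper bound, and the optimizer $x_j^*=|v_{1j}|^{2/q}\sgn(v_{1j})$ with $\bA\bx^*=\sigma_1\be_1$ for the lower bound. The hypotheses you make explicit, $q,r\ge 2$ and $|\sigma_1|=\lambda_{\max}(\bSigma)$, are exactly what the paper uses silently when it writes $\bU\bSigma\bV\bv_1=\lambda_{\max}(\bSigma)\bu_1$, and the second one is genuinely needed: for instance, $\bU=\bV=\bI$ and $q>2$ give $\bA=\sigma_1\be_1\be_1^T$, so $\|\bA\|_{q\to r}=|\sigma_1|$; your direct chain also avoids the paper's use of $\bSigma_V^{-1}$, which does not exist when some $v_{1j}=0$.
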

\begin{proof}
Assume that $\bSigma_V=\mathrm{diag}(\lambda_1,\lambda_2,\cdots, \lambda_n)$ where $\lambda_i \in \bbR^+$ for all $i=1,\ldots, n$. Then, %\textcolor{red}{what are the $\{\lambda_i\}$? is $\bLambda_V$ or $\Sigma_V$?} 
we have
\begin{align}
 \|\bSigma_V \bx\|_2^2 &=\sum_{i=1}^n |\lambda_i|^2 |x_i|^2 \notag\\
 &\leq \bigg(\sum_{i=1}^n |v_{1i}|^2\bigg)^{\frac{q-2}{q}} \|\bx\|_q^2\notag\\
 &= \|\bx\|_q^2,  \qquad \forall \bx \in \bbR^n \label{G1}.
\end{align}
The equality in \eqref{G1} happens if
\begin{align}
\bx=   \begin{bmatrix} |v_{11}|^{\frac{2}{q}} \sgn(v_{11})\\  |v_{12}|^{\frac{2}{q}} \sgn(v_{12})\\ \vdots \\  |v_{1n}|^{\frac{2}{q}} \sgn(v_{1n}) \end{bmatrix} \label{H1}. 
\end{align}
It follows that
\begin{align}
\|\bA\|_{q \to 2}&=\sup_{\bx: \|\bx\|_q \leq 1} \|\bA \bx\|_2\notag\\
&\leq \sup_{\bx: \|\bSigma_V \bx\|_2 \leq 1} \|\bA \bx\|_2 \label{G2a} \\
&=  \sup_{\by: \|\by\|_2 \leq 1} \|\bA \bSigma_V^{-1}\by\|_2 \label{G2}\\
&=  \|\bA \bSigma_V^{-1}\|_{2 \to 2}\notag\\
&=  \|\bU \bSigma \bV\|_{2\to 2} \notag\\
&=  \lambda_{max}(\bSigma)
\label{G3}
\end{align}
The equality in \eqref{G3} happens at 
\begin{align*}
\by^*= \bv_1, 
\end{align*} 
or
\begin{align*}
\bx^* =  \bSigma_V^{-1} \by^* =     \begin{bmatrix} |v_{11}|^{\frac{2}{q}} \sgn(v_{11})\\  |v_{12}|^{\frac{2}{q}} \sgn(v_{12})\\ \vdots \\  |v_{1n}|^{\frac{2}{q}} \sgn(v_{1n}) \end{bmatrix},
\end{align*}
which satisfies \eqref{H1}. 

Hence, we have
\begin{align*}
\|\bA\|_{q \to 2} = \lambda_{\max}(\bSigma)
\end{align*} for any $\bU \in \bbR^{m \times m}, \bSigma \in \bbR^{m \times n}, \bV \in \bbR^{n \times n}$.

Now, for all $r \geq 2$ we have
\begin{align*}
\|\bA\|_{q \to r}^r&= \sup_{\bx: \|\bx\|_q \leq 1} \|\bA \bx\|_r^r\\
&= \sup_{\bx:\|\bx\|_q \leq 1} \sum_{i=1}^m |\ba_i^T \bx|^r\\
&\leq \sup_{\bx:\|\bx\|_q \leq 1} \bigg(\sum_{i=1}^m |\ba_i^T \bx|^2\bigg)^{\frac{r}{2}}\\
&\leq \sup_{\bx: \|\bx\|_q \leq 1} \|\bA \bx\|_2^r. 
\end{align*}
Hence, we have
\begin{align}
\|\bA\|_{q \to r} &\leq \sup_{\bx: \|\bx\|_q \leq 1} \|\bA \bx\|_2 \label{H3}\\
&= \|\bA \|_{q \to 2}\notag\\
&= \lambda_{\max}(\bSigma) \label{H4}. 
\end{align}
The equality in \eqref{H3} happens if
\begin{align}
\bA \bx = \gamma \begin{bmatrix} 1 \\ 0 \\ 0 \\ \vdots \\ 0 \end{bmatrix} \label{H0},
\end{align} for some $\gamma \in \bbR$. 

%The equality in \eqref{H4} happens at
%\begin{align}
%\bx^*=   \begin{bmatrix} |v_{11}|^{\frac{2}{q}} %\sgn(v_{11})\\  |v_{12}|^{\frac{2}{q}} %\sgn(v_{12})\\ \vdots \\  |v_{1n}|^{\frac{2}{q}} %\sgn(v_{1n}) \end{bmatrix} \label{H0b}. 
%\end{align}

Now, we show that at the optimizer $\bx=\bx^*$ then \eqref{H0} is satisfied when $\bu_1=(1,0,0,\cdots, 0)^T \in \bbR^m$. Indeed, we have
\begin{align*}
\bA \bx^*&= \bU \bSigma \bV \bSigma_V   \begin{bmatrix} |v_{11}|^{\frac{2}{q}} \sgn(v_{11})\\  |v_{12}|^{\frac{2}{q}} \sgn(v_{12})\\ \vdots \\  |v_{1n}|^{\frac{2}{q}} \sgn(v_{1n}) \end{bmatrix} \\
&= \bU \bSigma \bV \bv_1\\
&= \lambda_{\max}(\bSigma)\bu_1 \\
&= \lambda_{\max} \begin{bmatrix} 1 \\ 0 \\ 0 \\ \vdots \\ 0 \end{bmatrix}. 
\end{align*}

Hence, finally we have
\begin{align*}
\|\bA \|_{q \to r} = \lambda_{\max}(\bSigma). 
\end{align*}
\end{proof}
\subsection{The norm $1\rightarrow r$}
This section deals with the norm $\|\cdot\|_{1\rightarrow r}$.
\begin{theorem} 
\label{thm: 1rnorm}
Let $\bA \in \bbR^{m \times n}$ with columns $\bc_1,\bc_2, \cdots, \bc_n$. Let $j_0=\mbox{argmax}_{1\leq j \leq n} \|\bc_j\|_r$. Then, it holds that
\begin{align*}
\|\bA\|_{1 \to r} = \max_{1 \leq j \leq n} \|\bc_j\|_r. 
\end{align*}
\end{theorem}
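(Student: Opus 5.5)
The plan is to prove matching upper and lower bounds, using only the triangle inequality for $\|\cdot\|_r$ together with the column expansion $\bA\bx=\sum_{j=1}^n x_j \bc_j$. The underlying reason is that the unit ball of $\|\cdot\|_1$ is the convex hull of $\{\pm \be_j\}_{j=1}^n$. The map $\bx\mapsto \|\bA\bx\|_r$ is convex, so its supremum over this polytope is attained at an extreme point.

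For the upper bound, fix $\bx$ with $\|\bx\|_1\le 1$. By the triangle inequality and homogeneity of the $r$-norm (valid since $r\ge 1$),
\begin{align*}
\|\bA\bx\|_r=\Big\|\sum_{j=1}^n x_j\bc_j\Big\|_r\le \sum_{j=1}^n |x_j|\,\|\bc_j\|_r\le \Big(\max_{1\le j\le n}\|\bc_j\|_r\Big)\|\bx\|_1\le \|\bc_{j_0}\|_r .
\end{align*}
Taking the supremum over $\bx$ gives $\|\bA\|_{1\to r}\le \|\bc_{j_0}\|_r$. This step is essentially Hölder's inequality in its $(1,\infty)$ form, applied to the sequences $(|x_j|)_j$ and $(\|\bc_j\|_r)_j$.

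For the lower bound, take $\bx^*=\be_{j_0}$, which satisfies $\|\bx^*\|_1=1$ and $\bA\bx^*=\bc_{j_0}$. Hence $\|\bA\|_{1\to r}\ge \|\bc_{j_0}\|_r$, and the two bounds together give the claimed formula with maximizer $\be_{j_0}$.

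There is no real obstacle here. The only point needing care is that the triangle inequality requires $r\ge 1$, which is the standing assumption of the paper. One may also remark, in the spirit of the other results, that the maximizer need not be unique: equality holds for any $\bx$ supported on indices achieving the maximum column norm whose weighted columns are positively collinear.
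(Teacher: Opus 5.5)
Your proposal is correct and follows essentially the same route as the paper: expand $\bA\bx=\sum_j x_j\bc_j$, apply the triangle inequality together with $\|\bx\|_1\le 1$ for the upper bound, and use $\bx_*=\be_{j_0}$ for the lower bound. Your write-up is also cleaner: the paper's display writes the triangle-inequality step $\|\sum_j \bc_j x_j\|_r\le\sum_j\|\bc_j\|_r|x_j|$ as an equality and drops the factor $\|\bc_j\|_r$ in the next line, and you state both of these steps correctly.
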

\begin{proof} Observe that
\begin{align*}
\|\bA\|_{1 \to r} &=\max_{\bx: \|\bx\|_1 \leq 1} \| \bA \bx \|_r\\
&=\max_{\bx: \|\bx\|_1 \leq 1} \bigg\|\sum_{j=1}^n \bc_j x_j \bigg\|_r\\
&=\max_{\bx: \|\bx\|_1 \leq 1} \sum_{j=1}^n \|\bc_j \|_r |x_j |\\
&\leq \max_{\bx: \|\bx\|_1 \leq 1} \max_{1 \leq j \leq n}   \sum_{j=1}^n |x_j |\\
&= \max_{1 \leq j \leq n}  \|\bc_j\|_r   \max_{\bx: \|\bx\|_1 \leq 1} \sum_{j=1}^n |x_j |\\\
&\leq \max_{1 \leq j \leq n}  \|\bc_j\|_r. 
\end{align*}
On the other hand, let $\bx_*= \be_{j_0}$, where $\be_{j_0}$ is the vector with all elements zero except at the position $j_0$ at which it is equal to $1$. Then, we have
$ \|\bx_*\|_q=1$ and $\| \bA \bx_* \|_r = \|\bc_{j_0}\|_r$. 
\end{proof}
\section{Summary and outlook}
\label{sec:summary}
Determining explicit values of induced norms is important because they quantify how operators amplify inputs, underpinning stability, error analysis, and conditioning. Exact formulas yield sharper results and avoid conservative estimates, improving both theory and computation. However, apart from the classical cases $q,r\in
\{1,2,\infty\}$, essentially no purely analytic expressions for $\|\bA\|_{q\to r}$ are known. Existing work provides approximations or bounds rather than exact structural characterizations, largely due to the underlying maximization problem being highly non-convex with potentially many stationary points. In this paper we have calculated explicitly $\|\bA\|_{q\to r}$ for all $q,r\geq 1$ for several important classes of matrices. Our approach relies on thoroughly analyzing the extremal structure of maximizers and the geometry of the constraint sets. We expect that the present work paves the way for future investigations on this research direction for more complex classes of matrices and for extensions to infinite-dimensional Hilbert spaces.
\section*{Acknowledgment} Lan V. Truong would like to thank Ho Chi Minh City University of Technology (HCMUT), Vietnam National University Ho Chi Minh City (VNU-HCM), for supporting this study. The research of MHD was supported by EPSRC (Grant Number EP/Y008561/1). 

\bibliographystyle{tmlr}
%\bibliography{main}
%\bibliographystyle{iclr2024_conference}
\bibliography{isitbib}

\appendix
%\section{Appendix}

\end{document}